\documentclass[11pt,twoside,reqno]{amsart}

\usepackage{a4wide}
\usepackage[T1]{fontenc}
\usepackage[utf8]{inputenc}
\usepackage{amsmath}
\usepackage{amsfonts}
\usepackage{amssymb}
\usepackage{mathtools}
\usepackage{amsthm}
\usepackage{xcolor}
\usepackage{graphicx}
\usepackage[hidelinks]{hyperref}
\usepackage{standalone}
\usepackage{esint}
\usepackage{comment}
\usepackage[left=2.5cm,top=3cm,right=2.5cm]{geometry} 
\usepackage{bm}
\usepackage{enumerate}
\usepackage{enumitem}
\usepackage{cite}

\newcommand{\R}{\mathbb{R}}

\newcommand{\N}{\mathbb{N}}

\newcommand{\Z}{\mathbb{Z}}

\renewcommand{\epsilon}{\varepsilon}
\renewcommand{\rho}{\varrho}
\renewcommand{\phi}{\varphi}

\newcommand{\supp}{\mathrm{supp}}

\DeclareMathOperator{\area}{area}

\newcommand{\norm}[1]{\left\lVert#1\right\rVert}

\numberwithin{equation}{section}

\theoremstyle{plain}
\newtheorem{thm}{Theorem}[section]
\newtheorem{theorem}[thm]{Theorem}

\newtheorem{lemma}[thm]{Lemma}

\theoremstyle{definition}

\newtheorem{definition}[thm]{Definition}

\theoremstyle{remark}
\newtheorem{remark}[thm]{Remark}

\title{Quantum mixing for eigenfunctions of rational polygons in configuration space}

\author{Kai Hippi}
\address{Aalto University, Espoo, Finland}
\email{kai.hippi@aalto.fi}

\author{S{\o}ren Mikkelsen}
\address{University of Helsinki, Helsinki, Finland}
\email{soren.mikkelsen@helsinki.fi}

\begin{document}

\begin{abstract}
The Shnirelman–Zelditch–Colin de Verdi\`ere theorem and its weak mixing extension relate quantum ergodicity and quantum mixing with ergodicity and weak mixing of the geodesic flow. Integrable systems, such as the flat torus, do not satisfy either in general. Restricting to position-dependent observables, Marklof and Rudnick established equidistribution for almost all eigenfunctions of rational polygons. In this note, we extend this result to off-diagonal elements for a subset of rational polygons not including the torus using weak mixing of the directional billiard flow for almost all directions. Additionally, we provide a different proof that establishes the same result for $2$-tori.
\end{abstract}

\maketitle

\section{Introduction}

Quantum chaos studies the quantum systems arising from the classical systems via quantisation; especially, it tries to understand the implications of the classical chaos on the quantum side. One of the main challenges of quantum chaos is to understand the eigendata of the Hamiltonian, the energy operator of the system; the eigenvalues and eigenfunctions of the Hamiltonian yield an easy way to understand the evolution of the quantum system governed by the Schrödinger equation. 

The ergodic theorem of Shnirelman \cite{shnirelmanErgodicPropertiesEigenfunctions}, Zelditch \cite{zelditchUniformDistributionEigenfunctions1987}, and Colin de Verdi\`ere \cite{colindeverdiereErgodiciteFonctionsPropres1985} for compact Riemannian manifolds is a cornerstone result of quantum chaos. It states that the quantum system with Hamiltonian being the Laplace-Beltrami operator is quantum ergodic if the geodesic flow on the classical side is ergodic. Zelditch \cite{Zel96a} extended this to show that if the geodesic flow is weakly mixing on the classical side, this implies the quantum weak mixing property on the quantum side. A generalisation of the previous theorem, stated in Theorems 1 and 2 of \cite{Zel05}, can be obtained from the works of many authors: \cite{Shn74, colindeverdiereErgodiciteFonctionsPropres1985, zelditchUniformDistributionEigenfunctions1987, Zel90, GL93, LS93, Zel96b, Zel96a, ZZ96, Sun97}. One of the main consequences of these generalised theorems is that they give an equivalence between the ergodicity and mixing properties on the classical and quantum side. This in particular implies that integrable systems such as 2-spheres and 2-tori are neither quantum ergodic nor quantum mixing. Recently, there have also been a number of results concerning quantum mixing in the large scale limit; see \cite{Hippi,bordenave2026quantummixinglargeschreier,hippi2026quantummixingschrodingereigenfunctions}.  

These previously mentioned results on the quantum side have all been for general zeroth order pseudodifferential operators. One could limit this class of observables by, for example, restricting to multiplication operators by smooth functions. For this restricted class of observables, the question would be the same. One example of systems in which such questions have been studied is rational polygons. These are not quantum ergodic; however, it was established by Marklof and Rudnick \cite{MR2879311} that none the less almost all eigenfunctions still equidistribute in configuration space. Our aim in this note is to extend this result and establish the other properties of Theorems 1 and 2 in \cite{Zel05} when restricting to configuration space.         

Our first results concern the properties connected to ergodicity. As is the case for the results in \cite{MR2879311}, these will be true for any rational polygon. The statement is as follows.

\begin{theorem}\label{thm:main_1}
    Assume that $D$ is a rational polygon. Let $\{\psi_n\}_{n\in\N}$ be an orthonormal basis for the Dirichlet Laplacian on $D$ with the corresponding sequence of eigenvalues $\{\lambda_n^2\}_{n\in\N}$. Then for any $a\in C^\infty_c(D)$ the following two properties are true.  
    \begin{enumerate}[label=$(\arabic*)$]
        \item
        \begin{equation*}
        \lim_{\lambda\rightarrow \infty}\frac{1}{\mathcal{N}(\sqrt{-\Delta_D},\lambda)} \sum_{\lambda_n \leq \lambda} \Big| \langle a \psi_n , \psi_n\rangle - \frac{1}{\area(D)} \int_{D} a(x)  \, dx \Big|^2 =0.
        \end{equation*}
        \item For every $\varepsilon>0$ there exists $\delta(\varepsilon)>0$ such that
        \begin{equation*}
        \limsup_{\lambda\rightarrow \infty}\frac{1}{\mathcal{N}(\sqrt{-\Delta_D},\lambda)} \sum_{\substack{j\neq k: \lambda_j,\lambda_k\leq \lambda  \\ |\lambda_{j}-\lambda_{k}| < \delta(\varepsilon)}} \big| \langle a \psi_j , \psi_k\rangle  \big|^2 <\varepsilon.
        \end{equation*}
    \end{enumerate}
    In both cases $\mathcal{N}(\sqrt{-\Delta_D},\lambda)$ is the number of eigenvalues of $\sqrt{-\Delta_D}$ less than or equal to $\lambda$. 
\end{theorem}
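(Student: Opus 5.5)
Part (1) is exactly the Marklof--Rudnick theorem \cite{MR2879311}; I would only recall how its proof works, since part (2) uses the same structure. For part (2), the plan is a Zelditch-type argument for off-diagonal terms with small frequency difference \cite{Zel96a}. The key ingredient is the Marklof--Rudnick local Weyl law with averaging over directions. For a rational polygon $D$ with group $G$ generated by the reflections in its sides, and for $a\in C_c^\infty(D)$, the leading-order local Weyl law for the wave propagator (or for the spectral projector smoothed in time) is governed by the billiard flow. By unfolding, this flow splits into directional flows on the invariant surfaces $M_\theta$. On configuration-space observables each directional flow is uniquely ergodic for all but countably many directions (Kerckhoff--Masur--Smillie). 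The spatial average of $a$ under the flow in direction $\theta$ therefore converges to $\frac{1}{\area(D)}\int_D a$ for almost every $\theta$. Since the direction is integrated against Lebesgue measure in the Weyl law, this suffices.

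For (2), I would set $\bar a = \frac{1}{\area(D)}\int_D a$ and $b=a-\bar a$. Off the diagonal, $\langle a\psi_j,\psi_k\rangle=\langle b\psi_j,\psi_k\rangle$ by orthogonality, so we may assume $\int_D a=0$. I would choose an even $\rho\in\mathcal S(\R)$ with $\hat\rho\in C_c^\infty$, $\hat\rho\ge 0$ and $\hat\rho(0)=1$, such that $|\hat\rho(s)|\ge 1/2$ for $|s|\le\delta$. Writing $U(t)=e^{it\sqrt{-\Delta_D}}$, define
\[
A_T=\frac1T\int_0^T U(-t)\,a\,U(t)\,\rho_T(t)\,dt,
\]
or, more concretely, the time average with a compactly supported weight on $[-T,T]$. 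Its matrix elements are $\langle A_T\psi_j,\psi_k\rangle=\hat w_T(\lambda_j-\lambda_k)\langle a\psi_j,\psi_k\rangle$, and $|\hat w_T(s)|\ge c>0$ for $|s|<\delta=c'/T$. Restricting the Hilbert--Schmidt norm of $A_T$ on the spectral window gives
\[
\sum_{\substack{\lambda_j,\lambda_k\le\lambda\\ |\lambda_j-\lambda_k|<\delta}} |\langle a\psi_j,\psi_k\rangle|^2 \le C\sum_{\lambda_j\le\lambda}\|A_T\psi_j\|^2 \le C\sum_{\lambda_j\le\lambda}\langle A_T^*A_T\psi_j,\psi_j\rangle.
\]
$A_T^*A_T$ is a double time average of products $U(-s)aU(s)\,U(-t)aU(t)$. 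I would apply the local Weyl law in the form used by Marklof--Rudnick, namely the Egorov-type propagation of multiplication operators along the unfolded billiard flow away from the measure-zero set of trajectories hitting corners. This gives
\[
\lim_{\lambda\to\infty}\frac{1}{\mathcal N(\lambda)}\sum_{\lambda_j\le\lambda}\langle A_T^*A_T\psi_j,\psi_j\rangle=\frac{1}{2\pi\,\area(D)}\int_D\int_0^{2\pi}\Big|\frac1T\int_0^T a(\Phi^t_\theta(x))\,dt\Big|^2\,d\theta\,dx,
\]
where $\Phi_\theta^t$ is the directional billiard flow. For almost every $\theta$, unique ergodicity of $\Phi_\theta$ combined with $\int a=0$ makes the inner time average tend to $0$ in $L^2(dx)$. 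Dominated convergence in $\theta$ then makes the right-hand side smaller than $\varepsilon/C$ for $T$ large, and we set $\delta(\varepsilon)=c'/T$. Part (1) follows from the same inequality restricted to $j=k$, using the Kuznecov/Cauchy--Schwarz step $|\langle a\psi_j,\psi_j\rangle|^2=|\langle A_T\psi_j,\psi_j\rangle|^2\le\|A_T\psi_j\|^2$. This reproduces \cite{MR2879311}.

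The main obstacle is justifying the local Weyl law for the product $U(-s)aU(s)U(-t)aU(t)$ on a polygon with corners. Egorov's theorem fails near trajectories through vertices, and $a\circ\Phi^t$ is not smooth. I would handle this as Marklof--Rudnick do. First, compute the wave trace via the method of images on the unfolded translation surface, where the propagator is explicit up to diffraction. Second, bound the diffractive contributions by cutting off a small neighbourhood of the finitely many singular directions within time $T$, for fixed $T$ as $\lambda\to\infty$. Third, use that $a$ has compact support away from $\partial D$, so for fixed $T$ the corner contributions are controlled uniformly and become negligible in the $\lambda\to\infty$ limit after an $\eta\to0$ cutoff.
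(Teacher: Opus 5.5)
Your proposal is correct and is essentially the paper's argument. The paper also uses the Zelditch--Zworski scheme: a time average over $[-T,T]$, so that the weight $\frac{\sin(sT)}{sT}$ stays bounded below for $|s|<\pi/(2T)$; Parseval to pass to $\sum_{\lambda_j\le\lambda}\langle A^*A\psi_j,\psi_j\rangle$; then Egorov and the local Weyl law; and it closes with the almost-every-direction ergodicity of Marklof--Rudnick. The corners are handled there not by a method-of-images computation but by a microlocal cutoff $E_\eta$ supported in the set $X_{2T}$ of trajectories avoiding vertices up to time $2T$. The discarded part $a-aE_\eta$ is controlled directly by the local Weyl law, and propagation of singularities with boundary cutoffs $\varphi_i^\nu$ handles the sides.

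Two minor corrections, neither of which affects the argument. Kerckhoff--Masur--Smillie give unique ergodicity only for Lebesgue-almost every direction; ``all but countably many'' is Veech's dichotomy for lattice polygons, and almost every is all you use. The time weight must be compactly supported in $t$ rather than in frequency, as you eventually choose, since Egorov is only available up to a fixed time $T$.
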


Our next result will concern the properties related to weak mixing. These results will not be true for all rational polygons, but will exclude a subset of rational polygons. This subset of polygons is defined in the following definition. 

\begin{definition}
    Let $\mathcal{E}$ be the set of all almost integrable polygons and all polygons with all angles in $\{\frac{\pi}{2},\frac{3\pi}{2}\}$ and if orientated to have horizontal/vertical sides, all horizontal lengths or all vertical lengths are commensurable.
\end{definition}

The integrable polygons are all rectangles, and the $(\frac{\pi}{2},\frac{\pi}{4},\frac{\pi}{4})$, $(\frac{\pi}{3},\frac{\pi}{3},\frac{\pi}{3})$ and $(\frac{\pi}{2},\frac{\pi}{3},\frac{\pi}{6})$ triangles. An almost integrable polygon is a polygon drawn on the grid, or lattice in the terminology of Gutkin, spanned by reflecting one of the completely integrable polygons along its side ad infinitum. A set of numbers is commensurable if all ratios are rational. With this, we can state our next result.

\begin{theorem}\label{thm:main_2}
    Assume that $D$ is a rational polygon that does not belong to $\mathcal{E}$. Let $\{\psi_n\}_{n\in\N}$ be an orthonormal basis for the Dirichlet Laplacian on $D$ with the corresponding sequence of eigenvalues $\{\lambda_n^2\}_{n\in\N}$. Then for any $a\in C^\infty_c(D)$ the following property is true.  
    \begin{itemize}
        \item[$(3)$]
        For every $\varepsilon>0$ there exists $\delta(\varepsilon)>0$ such that for all $\tau\in\R$
        \begin{equation*}
        \limsup_{\lambda\rightarrow \infty}\frac{1}{\mathcal{N}(\sqrt{-\Delta_D},\lambda)} \sum_{\substack{j\neq k: \lambda_j,\lambda_k\leq \lambda  \\ |\lambda_{j}-\lambda_{k}-\tau| < \delta(\varepsilon)}} \big| \langle a \psi_j , \psi_k\rangle  \big|^2 <\varepsilon,
        \end{equation*}
    \end{itemize}
    where $\mathcal{N}(\sqrt{-\Delta_D},\lambda)$ is the number of eigenvalues of $\sqrt{-\Delta_D}$ less than or equal to $\lambda$. 
\end{theorem}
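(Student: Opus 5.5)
Following Zelditch's approach to quantum weak mixing, I would reduce property $(3)$ to a time-averaged statement about the wave group. Then I would verify that statement classically, using weak mixing of the directional billiard flow on the translation surface obtained by unfolding $D$.

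\textbf{Step 1 (reduction to a time average).} Let $U(t)=e^{it\sqrt{-\Delta_D}}$ and $a_t=U(-t)aU(t)$. Put $\bar a=\frac{1}{\area(D)}\int_D a\,dx$ and $a_0=a-\bar a$. Fix a Schwartz function $\rho$ with $\hat\rho\ge 0$ and $\hat\rho\ge 1$ on $[-1,1]$, and let $\rho_T(t)=T^{-1}\rho(t/T)$. Then
\begin{equation*}
\frac{1}{\mathcal N}\sum_{j,k}\hat\rho\big(T(\lambda_j-\lambda_k-\tau)\big)\,|\langle a_0\psi_j,\psi_k\rangle|^2=\frac{1}{\mathcal N}\sum_{\lambda_j\le\lambda}\Big\langle \Big(\int \rho_T(t)e^{-it\tau}a_{0,t}\,dt\Big)^{*}\Big(\int\rho_T(s)e^{-is\tau}a_{0,s}\,ds\Big)\psi_j,\psi_j\Big\rangle
\end{equation*}
up to harmless conjugation conventions. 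For $j\neq k$ we have $\langle a_0\psi_j,\psi_k\rangle=\langle a\psi_j,\psi_k\rangle$, and the left side dominates the sum in $(3)$ with $\delta=1/T$. It therefore suffices to show that the $\limsup_{\lambda\to\infty}$ of the right side tends to $0$ as $T\to\infty$, uniformly in $\tau$.

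\textbf{Step 2 (semiclassical limit on a polygon).} The right side is a Cesàro mean of diagonal matrix elements of a product of time-evolved multiplication operators. I would evaluate its limit with a local Weyl law for billiards in polygons, in the form used by Marklof and Rudnick \cite{MR2879311}. Since $a$ is compactly supported in the interior, for finite $t$ one can propagate $a$ along billiard trajectories and discard the measure-zero set of trajectories hitting vertices, in the style of Zelditch--Zworski \cite{ZZ96}. The expected limit is
\begin{equation*}
\frac{1}{\area(D)\,2\pi}\int_{D\times S^1}\Big|\int\rho_T(t)e^{-it\tau}a_0(\phi^t(x,\theta))\,dt\Big|^2\,dx\,d\theta ,
\end{equation*}
where $\phi^t$ is the billiard flow. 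The phase space $D\times S^1$ decomposes over the finitely many directions in each orbit of the dihedral group $G_D$. After unfolding, this becomes $\int_{\theta}\int_{X}|\cdots|^2$, where $X$ is the translation surface and $\phi_\theta^t$ is the straight-line flow in direction $\theta$.

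\textbf{Step 3 (classical weak mixing).} For almost every direction $\theta$ the flow $\phi^t_\theta$ on $X$ is weakly mixing. This is the Avila--Forni theorem, applicable precisely when $X$ is not a torus cover. For the square-tiled/rectilinear exceptions I would use the classification behind $\mathcal E$ (Gutkin, Veech), since these are exactly the cases where weak mixing fails on a positive-measure set of directions. Weak mixing means the spectral measure $\mu_{\theta}$ of $a_0\circ\pi$ (which has mean zero after lifting) has no atoms. The inner quantity equals $\int|\hat\rho(T(s-\tau))|^2\,d\mu_\theta(s)$. Since $\mu_\theta$ is finite, this tends to $0$ as $T\to\infty$ uniformly in $\tau$, because the maximal mass of intervals of length $\sim 1/T$ goes to zero by continuity. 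Dominated convergence in $\theta$ then finishes the argument. Here one must be careful that the uniformity in $\tau$ survives the $\theta$-integration; $\sup_\tau$ of each term is bounded and tends to $0$ pointwise in $\theta$, so dominated convergence applies.

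\textbf{Main obstacle.} I expect Step 2 to be the hardest: justifying the Egorov/local Weyl limit for time-averaged products on a polygon, where the billiard flow is singular at vertices and the observable is only a multiplication operator. I would first try to adapt the Marklof--Rudnick argument. It uses the wave trace or heat kernel via unfolding, and it controls the diffraction contributions from vertices for fixed finite $T$ because $a$ is compactly supported away from $\partial D$.
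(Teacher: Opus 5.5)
Your route is the paper's. You turn the spectral window into a Ces\`aro mean of $\langle B\psi_j,\psi_j\rangle$ by time-averaging, take the classical limit with the Zelditch--Zworski machinery, and finish with weak mixing of the directional flows.

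Differences in the first step are cosmetic. The paper uses the sharp average $\frac{1}{2T}\int_{-T}^{T}$ with $\delta=\pi/(2T)$, where $|\sin(xT)/(xT)|^2\ge\frac14$, instead of a smooth weight. With your weight the identity carries $|\hat\rho|^2$ rather than $\hat\rho$. You should also take $\rho$ compactly supported, e.g.\ $\rho=\chi*\tilde\chi$ with $\tilde\chi(t)=\overline{\chi(-t)}$, rescaled, because Lemma~\ref{LE:Egorov} only holds for bounded times.

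Step 2 is done exactly as you anticipate, with no wave-trace or heat-kernel input:
\begin{enumerate}[label=(\roman*)]
\item replace $a$ by $aE_\eta$, with $E_\eta$ microlocalised in the set $X_{2T}$ where the flow is defined up to time $2T$;
\item bound the remainder via Parseval and Lemma~\ref{LE:Weyl_law_gene} by $\int_{S^*D}|a|^2|1-\sigma_0(E_\eta)|^2\,d\mu$;
\item treat the boundary with cutoffs $\varphi_i^\nu$ and propagation of singularities;
\item apply Lemma~\ref{LE:Egorov}, then let $\eta\to0$ and $\nu\to0$ with $T$ fixed.
\end{enumerate}
Your spectral-measure argument for uniformity in $\tau$ is correct. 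The paper gets the same uniformity more cheaply in Lemma~\ref{LE:consequence_weak_mixing}: it moves the absolute value inside the double time integral, which removes the phase $e^{i(t-s)\tau}$ and leaves a $\tau$-independent Ces\`aro mean of the absolute correlation. Your version also handles $\tau=0$ directly, whereas the paper defers that case to Theorem~\ref{thm:main_1}.

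What needs fixing is the justification of the dynamical input in Step 3. The Avila--Forni theorem gives weak mixing in almost every direction only for Masur--Veech-almost every translation surface in a stratum. Unfoldings of rational polygons form a null set, so it tells you nothing about your $X$.

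``Not a torus cover'' is also not the right criterion. Take $D$ rectilinear with all horizontal side lengths in $L\Z$. Then the horizontal coordinate taken modulo $2L$, with sign flipped on copies reflected an odd number of times in vertical sides, is well defined on the unfolded surface. Its exponential $e^{\pi i(\pm x)/L}$ is a nonconstant eigenfunction of every non-vertical directional flow, yet $X$ is in general not a torus cover (take incommensurable vertical lengths).

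The statement you actually need is weak mixing of $\Phi^t_\omega$ for almost every $\omega$ whenever $D\notin\mathcal{E}$. That is the theorem of Arana-Herrera, Chaika and Forni \cite{aranaherrera2024weakmixingrationalbilliards}, from which the class $\mathcal{E}$ is taken, and the paper uses it as a black box. With that citation in place of Avila--Forni and the Gutkin/Veech remark, your argument goes through.
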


\begin{remark}
$ $ \\ 
\vspace{-1em}
\begin{enumerate}[label=\textnormal{(\roman*)}]
    \item  Theorems~\ref{thm:main_1} and \ref{thm:main_2} also hold for the Neumann Laplacian on $D$ and more generally for an arbitrary translation surface with the obvious restriction in the case of Theorem~\ref{thm:main_2}.

    \item The proofs of both Theorems~\ref{thm:main_1} and \ref{thm:main_2} can be generalised to pseudodifferantial operators of order zero under the assumption that the billiard flow of the polygon is ergodic or weakly mixing in phase space. Note that there exist polygons with weakly mixing billiard flow; see \cite{MR5067163}.
\end{enumerate}
\end{remark}

When considering Property (3) of the previous theorem and the proof, one would expect that this property will not be true for tori, since for this case the directional billiard flow is not weakly mixing, as established in \cite{aranaherrera2024weakmixingrationalbilliards} and considering the equivalences established in \cite{Zel05}. However, as is evident from the next theorem, this is not the case. Before we state this result, we will introduce some notation. For $l\in \R^2_{>0}$ we denote by $\Gamma_l$ the lattice
\begin{equation*}
    \Gamma_l = ( l_1 \Z) \times ( l_2 \Z)
\end{equation*}
and the dual lattice by $\Gamma_l^{*}$. With this notation, we have the following theorem.

\begin{theorem}
    \label{Thm:Main_theorem_tori}
    Let $l\in \R^2_{>0}$ and set $\mathbb{T}_{l} = \R^2 \big/ \Gamma_{l}$. Moreover, let $\{\psi_{n}\}_{n\in\Gamma_l^{*}}$ be an orthonormal basis for the Laplace-Beltrami operator $-\Delta_{\mathbb{T}_{l}}$ with the corresponding sequence of eigenvalues $\{\lambda_{n}^2\}_{n\in\Gamma_l^{*}}$. 
    Then for any $a\in L^2(\mathbb{T}_{l})$ points $(1)-(3)$ of Theorem~\ref{thm:main_1} and Theorem~\ref{thm:main_2} are valid.
\end{theorem}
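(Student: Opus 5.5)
The plan is to exploit the fact that on $\mathbb{T}_l$ everything is explicit in Fourier space, so that all three properties reduce to lattice point counting. The one difficulty is that the basis $\{\psi_n\}$ is arbitrary inside each (possibly degenerate) eigenspace, so I will first rewrite the sums in a basis-invariant form.

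\textbf{Reduction to a basis-invariant form.} Let $a_0 = a - \frac{1}{\area(\mathbb{T}_l)}\int a$. For $j\neq k$ we have $\langle a\psi_j,\psi_k\rangle=\langle a_0\psi_j,\psi_k\rangle$, and for $j=k$ the diagonal quantity in (1) is exactly $\langle a_0\psi_n,\psi_n\rangle$. So in all three properties I may replace $a$ by $a_0$, whose zeroth Fourier coefficient vanishes.

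\emph{Dropping $j\neq k$.} Next I enlarge each sum by dropping the restriction $j\neq k$. For (1), I add the off-diagonal terms within each eigenspace.

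\emph{Passing to Hilbert--Schmidt norms.} Writing $P_\mu$ for the spectral projection onto the eigenspace of $\sqrt{-\Delta}$ with eigenvalue $\mu$, each enlarged sum equals
\[
\sum_{\mu,\nu\le\lambda,\ |\mu-\nu-\tau|<\delta}\|P_\nu a_0 P_\mu\|_{HS}^2 .
\]
For (1) the condition is $\mu=\nu$ instead. This expression is independent of the choice of orthonormal basis.

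\emph{Computing in the exponential basis.} I can therefore compute it with the normalised exponentials $e_m(x)=e^{2\pi i m\cdot x}/\sqrt{\area}$, $m\in\Gamma_l^*$. Since $|\langle a_0e_m,e_n\rangle|^2=|\hat a_0(n-m)|^2/\area$, the sum in (2)--(3) is bounded by
\[
\frac{1}{\area}\sum_{\xi\in\Gamma_l^*\setminus\{0\}}|\hat a_0(\xi)|^2\,\#\bigl\{m\in\Gamma_l^*:\ |m|,|m+\xi|\le \tfrac{\lambda}{2\pi},\ \bigl|2\pi(|m+\xi|-|m|)-\tau\bigr|<\delta\bigr\}.
\]
For (1) the count is replaced by $\#\{m: |m|\le\lambda/2\pi,\ |m+\xi|=|m|\}$.

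\textbf{Truncation and counting.} Since $a\in L^2$, I have $\sum_\xi|\hat a_0(\xi)|^2<\infty$. Each count is trivially at most $\#\{|m|\le\lambda/2\pi\}\sim\mathcal{N}(\lambda)$. Hence the tail $|\xi|>R$ contributes at most $\sum_{|\xi|>R}|\hat a_0(\xi)|^2$, which is small once $R$ is chosen large. It then remains to treat the finitely many $\xi$ with $0<|\xi|\le R$.

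\emph{Property (1).} For fixed $\xi\ne0$, the set $\{|m+\xi|=|m|\}$ is the perpendicular bisector line of the segment from $0$ to $-\xi$. It contains $O(\lambda)$ lattice points in the disc of radius $\lambda/2\pi$, and $O(\lambda)=o(\lambda^2)=o(\mathcal{N}(\lambda))$, which proves (1).

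\emph{Properties (2) and (3).} For fixed $\xi\neq 0$, the set
\[
\bigl\{m: \bigl|2\pi(|m+\xi|-|m|)-\tau\bigr|<\delta\bigr\}
\]
is a region between two branches of hyperbolas with foci $0$ and $-\xi$. For $|m|$ large, $|m+\xi|-|m| = \xi\cdot\frac{m}{|m|}+O(|\xi|^2/|m|)$, so asymptotically the region is a union of angular sectors. In polar coordinates, the condition is that $\theta=m/|m|$ satisfy $|2\pi\xi\cdot\theta-\tau|<\delta+o(1)$. The arclength measure of such $\theta$ on the circle is at most $C\sqrt{\delta/|\xi|}$, uniformly in $\tau$; the worst case is the tangential one, where $\tau\approx\pm2\pi|\xi|$. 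The lattice count within the disc is then (area of the region) plus $O(\lambda)$ boundary error, since the region has boundary length $O(\lambda)$. Dividing by $\mathcal{N}(\lambda)\asymp\lambda^2$ gives
\[
\limsup_{\lambda\to\infty}\frac{\#\{\cdots\}}{\mathcal{N}(\lambda)}\le C\sqrt{\delta/|\xi|}\le C\sqrt{\delta/r_0},
\]
where $r_0$ is the length of the shortest nonzero vector of $\Gamma_l^*$. Summing over $|\xi|\le R$ gives a bound $C\sqrt{\delta}\,\|a\|_2^2$ uniformly in $\tau$. Choosing $R$ first and then $\delta(\varepsilon)$ yields (2), which is the case $\tau=0$, and (3).

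\textbf{Main obstacle.} The step I expect to need the most care is the uniform-in-$\tau$ lattice count near the tangential case $|\tau|\approx2\pi|\xi|$. There the hyperbolic strip degenerates towards a half-line. I would handle it by bounding the region inside a union of $O(1)$ convex pieces (sectors intersected with the disc) whose area is $O(\lambda^2\sqrt{\delta/|\xi|})$ plus a bounded-width neighbourhood of the axis. Each convex piece contains at most area $+\,O(\text{perimeter})+1$ lattice points. The resulting $O(\lambda)$ errors are negligible after dividing by $\lambda^2$.
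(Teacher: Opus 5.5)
Your proof is correct, and it departs from the paper's at the counting step.

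The paper reduces to the exponential basis via its general basis-change lemma (Lemma~\ref{LE:exstension_to_other_basis}). This is the same Hilbert--Schmidt-norm-over-eigenspaces device you use inline, with one difference. In the paper, Property (1) for an arbitrary basis is inherited from Properties (1) and (2) of the exponential basis, whereas you count the equal-norm pairs on the perpendicular bisector directly.

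The paper then bounds $\sum_{k\neq 0}|\hat a(k)|^2\,\mathcal{W}(k,\lambda,\delta,\tau)$ using Lemma~\ref{LE:simple_counting_lemma}. That lemma asserts $\mathcal{W}\le C\delta\lambda^2+C'\lambda+1$ uniformly in $k$ and $\tau$, and proves it by fixing $n_2$ and confining $n_1$ to an interval of length $O(\delta\lambda)$. This route needs no truncation in Fourier space and gives the rate $O(\delta)$. However, the slicing is only legitimate for $\tau=0$: for $\tau\neq 0$ the endpoints of the $n_1$-interval contain $\tau(|n|+|n-k|)$ and so depend on $n_1$. In fact the claimed bound fails in exactly the tangential regime you isolated. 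For $\Gamma_l=\Z^2$, $k=(1,0)$, $\tau=2\pi$, every $n=(x,y)$ with $2\le x\le \lambda/(4\pi)$ and $|y|<x\sqrt{\delta/(2\pi)}$ is counted, giving $\gtrsim\sqrt{\delta}\,\lambda^2$ points.

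Your truncation in $\xi$, followed by the per-$\xi$ sublevel-set estimate $C\sqrt{\delta/|\xi|}$, therefore captures the correct order rather than being merely an alternative. Since $\sqrt{\delta}\to 0$, it still yields (3) with $\delta(\varepsilon)$ independent of $\tau$. For $\tau=0$ the paper's slicing is valid and gives the sharper $O(\delta)$.

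Finally, the obstacle you flag is milder than you suggest. The $\limsup$ is taken at fixed $\tau$, so only the leading area term must be uniform in $\tau$; the $O(\lambda)$ boundary errors may depend on $\tau$ and $\xi$.
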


\begin{remark}
$ $ \\ 
\vspace{-1em}
    \begin{enumerate}[label=\textnormal{(\roman*)}]
    \item Since the proof does not use microlocal analysis, we are able to consider these more general $L^2$ observables. Hence, we will also reprove Properties (1) and (2) since Theorem~\ref{thm:main_1} only establishes this for smooth observables.
    
    \item The result in Theorem~\ref{Thm:Main_theorem_tori} generalises to tori of any dimension greater than or equal to $2$.

    \item We remark that this theorem contrasts the setting of large scale tori, where Properties (2) and (3) are not true as demonstrated in the appendix of \cite{Hippi}.  

    \item It could be of interest to find a rational polygon such that Property (3) of Theorem~\ref{thm:main_2} is not valid.

    \end{enumerate}
\end{remark}

\subsection*{Acknowledgements} The authors thank Jens Marklof and Ze\'ev Rudnik for useful discussions and comments during the preparation of this manuscript. The authors acknowledge support from the Research Council of Finland's Academy Research Fellowship \emph{``Quantum chaos of large and many body systems''}, grant Nos. 347365, 353738. K.H. is also supported by the Vilho, Yrjö, and Kalle Väisälä Foundation. The authors would like to thank the Isaac Newton Institute for Mathematical Sciences, Cambridge, for support and hospitality during the programme Geometric Spectral Theory and Applications, where work on this manuscript was undertaken. This work was supported by EPSRC grant EP/Z000580/1.

\section{Preliminaries}
In this section, we collect some preliminary results needed to establish the main results. However, before we state the specific preliminaries needed for the questions studied here, we will state the following general lemma, which may be of independent interest.

\begin{lemma}\label{LE:exstension_to_other_basis}

    Let $(M,g)$ be a compact Riemannian manifold with Laplace-Beltrami operator $-\Delta_g$. Let $\{ \psi_n \}$ and $\{ \phi_n \}$ be two orthonormal eigenbases of $-\Delta_g$. Then:
    \begin{enumerate}
        \item [$(a)$] if Properties $(1)$ and $(2)$ hold for $\{ \psi_n\}$, they also hold for $\{ \phi_n \}$,
        \item [$(b)$] if Properties $(1)$, $(2)$, and $(3)$ holds for $\{ \psi_n\}$, they also hold for $\{ \phi_n \}$.
    \end{enumerate}
    
\end{lemma}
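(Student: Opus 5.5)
The key observation is that the two bases differ only by a unitary change of basis inside each eigenspace. For each distinct eigenvalue $\mu$ let $E_\mu$ be the eigenspace. The family $\{\psi_n\}$ restricted to $E_\mu$ is an orthonormal basis of $E_\mu$, and so is $\{\phi_n\}$; hence $\phi_j=\sum_{k} U^{(\mu)}_{jk}\psi_k$ with $U^{(\mu)}$ unitary. The plan is to rewrite each of the three sums in terms of the block matrices $A^{\mu\nu}_{jk}=\langle a\psi_j,\psi_k\rangle$ with $\psi_j\in E_\mu$, $\psi_k\in E_\nu$. We use that the Hilbert--Schmidt norm of a block is basis invariant: $\|P_\nu a P_\mu\|_{HS}^2=\sum_{j\in E_\mu,k\in E_\nu}|\langle a\psi_j,\psi_k\rangle|^2$, whichever orthonormal bases are used. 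Throughout we take $a$ real-valued; for complex $a$ we split into real and imaginary parts.

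First we treat Properties (2) and (3). The constraints $|\lambda_j-\lambda_k|<\delta$, respectively $|\lambda_j-\lambda_k-\tau|<\delta$, and $\lambda_j,\lambda_k\le\lambda$ depend only on the eigenspaces. The sum over pairs $j\neq k$ therefore equals a sum of $\|P_\nu aP_\mu\|_{HS}^2$ over admissible pairs $(\mu,\nu)$, minus the diagonal contribution $\sum_n|\langle a\psi_n,\psi_n\rangle|^2$ restricted to those $n$ whose eigenvalue pair is admissible. For Property (2) with $\mu=\nu$ this pair is always admissible. For Property (3) it is admissible exactly when $|\tau|<\delta$; when $|\tau|\ge\delta$ there is nothing to subtract, and the sum is literally basis independent.

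Next we compare diagonal terms, using Property (1). Set $\bar a=\area^{-1}\int a$. Property (1) for $\psi$ together with Cauchy--Schwarz gives $\frac{1}{\mathcal N}\sum_{\lambda_n\le\lambda}|\langle a\psi_n,\psi_n\rangle|^2\to|\bar a|^2$. We then apply this to each of the sums above.

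For Property (2), the sum for $\phi$ minus the sum for $\psi$ equals $\sum_n|\langle a\psi_n,\psi_n\rangle|^2-\sum_n|\langle a\phi_n,\phi_n\rangle|^2$. It is therefore enough to show that the $\phi$-diagonal sum is at least $\mathcal N(|\bar a|^2-o(1))$. This holds because $\sum_{n\in E_\mu}\langle a\phi_n,\phi_n\rangle=\operatorname{tr}(P_\mu aP_\mu)$ is basis invariant, so Cauchy--Schwarz in $n$ gives $\sum_{\lambda_n\le\lambda}|\langle a\phi_n,\phi_n\rangle-\bar a|^2\ge0$. Expanding, and using that the linear term $\sum(\langle a\phi_n,\phi_n\rangle-\bar a)=o(\mathcal N)$ holds by the $\psi$ result, we obtain the required lower bound. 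Hence Property (2) transfers, possibly with the same $\delta$.

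For Property (1) itself, we write
\[
\sum_{\lambda_n\le\lambda}|\langle a\phi_n,\phi_n\rangle-\bar a|^2\le\sum_{\mu\le\lambda}\sum_{j,k\in E_\mu}|\langle a\psi_j,\psi_k\rangle-\bar a\delta_{jk}|^2,
\]
which holds because the right side is $\|P_\mu(a-\bar a)P_\mu\|_{HS}^2$ while the left side keeps only its diagonal in the $\phi$ basis. The right side is the $\psi$ diagonal variance plus the off-diagonal terms with $\lambda_j=\lambda_k$. These off-diagonal terms are bounded by the sum in Property (2) for every $\delta$, so they are $\le\varepsilon\mathcal N$ for all $\varepsilon$, hence $o(\mathcal N)$. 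This gives (a).

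For (b) we only need Property (3), with $\delta$ replaced by $\min(\delta,\cdot)$. When $|\tau|\ge\delta$ the sum is basis invariant. When $|\tau|<\delta$ the sum differs from the $\psi$ sum only by the diagonal difference, which is $o(\mathcal N)$ by the argument above. I expect the main obstacle to be this diagonal bookkeeping: showing that the diagonal subtraction is asymptotically basis independent, which follows because both diagonal averages of $|\cdot|^2$ tend to $|\bar a|^2$ once Property (1) is known for both bases.
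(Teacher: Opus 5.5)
Your proof is correct, and its backbone is the paper's: basis invariance of the block Hilbert--Schmidt norms $\|P_\nu aP_\mu\|_{HS}$. For Property (1) your argument is the same as the paper's: you bound the $\phi$-variance by $\sum_{\mu}\|P_\mu(a-\bar a)P_\mu\|_{HS}^2$ and split this into the $\psi$-variance and the same-eigenvalue off-diagonal terms.

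The real difference is how you handle the diagonal in Property (2).

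\emph{Your route.} You write the off-diagonal sum exactly as the admissible block norms minus the diagonal second moment. You therefore need the lower bound $\sum_{\lambda_n\le\lambda}|\langle a\phi_n,\phi_n\rangle|^2\ge\mathcal N|\bar a|^2-o(\mathcal N)$. You get it from trace invariance, $\sum_{\phi_n\in E_\mu}\langle a\phi_n,\phi_n\rangle=\operatorname{tr}(P_\mu aP_\mu)$, by expanding $\sum|\langle a\phi_n,\phi_n\rangle-\bar a|^2\ge0$. That expansion is the real content; calling it ``Cauchy--Schwarz'' is a misnomer, but the computation is right.

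\emph{The paper's route.} It replaces $a$ by $a-\bar a$ first. This leaves every term with $j\neq k$ unchanged. The $\phi$-diagonal terms of $a-\bar a$ are nonnegative, so they can simply be added in. The full block sum for $a-\bar a$ is then the $\psi$-variance plus the $\psi$ off-diagonal sum, and Properties (1) and (2) for $\psi$ finish in one line. No trace or lower-bound step is needed.

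Your version is a bit longer, but it gives an exact identity between the two off-diagonal sums. So once Property (1) is known for both bases, the two sums agree up to $o(\mathcal N)$.

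In (b), you split into two cases. For $|\tau|\ge\delta$ the sum is literally basis invariant. For $|\tau|<\delta$ it changes only by that same diagonal difference. This keeps a single $\delta$ for all $\tau$, which is what Property (3) requires. The paper instead takes $\delta<|\tau|$ for each fixed $\tau\neq0$, which on its own makes $\delta$ depend on $\tau$. The range $0<|\tau|<\delta$ is left implicit there; it follows from the same $a-\bar a$ bound as in (a). Your argument treats that range explicitly.
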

This lemma is generalisable to pseudodifferential operators of order $0$ with a similar argument.
\begin{proof}

    Begin by proving Property (1) in (\emph{a}). We have
    \begin{align*}
        &
        \sum_{ j : \lambda_j \leq \lambda } | \langle (a - \bar{a}) \phi_j, \phi_j \rangle |^2 \leq \sum_{ \sqrt{\mu} \leq \lambda } \| \Pi_\mu (a - \bar{a}) \Pi_\mu \|_{HS}^2 = \sum_{ \sqrt{\mu} \leq \lambda } \, \sum_{ j,k: \lambda_j^2 = \lambda_k^2 = \mu } | \langle (a - \bar{a}) \psi_j, \psi_k \rangle |^2,
    \end{align*}
    where $\Pi_\mu$ is an orthonormal projection onto $V_\mu = \operatorname{span} \{  \psi_n : \lambda_n^2 = \mu \}$, and where $\overline{a}$ is the integral of $a$ with respect to the normalised volume form. The last inequality followed since the Hilbert-Schmidt norm is basis independent. This is bounded from above for any $\delta > 0$ by
    \begin{align}
        &
        \label{Expr: Lemma 2.1}
        \sum_{\sqrt{\mu} \leq \lambda } \, \sum_{j : \lambda_j^2 = \mu} | \langle (a - \bar{a}) \psi_j, \psi_j \rangle |^2 \ + \  \sum_{ \sqrt{\mu}, \sqrt{\mu'} \leq \lambda } \, \sum_{ \substack{  j \neq k: \lambda_j^2 = \mu, \lambda_k^2 = \mu', \\ | \sqrt{\mu} - \sqrt{\mu'}| < \delta} } | \langle (a - \bar{a}) \psi_j, \psi_k \rangle |^2.
    \end{align}
    Now Property (1) can be proven for $\{ \phi_n \}$ using Properties (1) and (2) for $\{ \psi_n\}$.

    Next, we prove Property (2) in (\emph{a}). We have
    \begin{align*}
        &
        \sum_{ \substack{ j \neq k : \lambda_j , \lambda_k \leq \lambda, \\ | \lambda_j - \lambda_k | < \delta} } | \langle a \phi_j, \phi_k  \rangle |^2 = \sum_{ \substack{ j \neq k : \lambda_j , \lambda_k \leq \lambda, \\ | \lambda_j - \lambda_k | < \delta} } | \langle (a - \bar{a}) \phi_j, \phi_k  \rangle |^2 \leq  \sum_{ \substack{\sqrt{\mu}, \sqrt{\mu'} \leq \lambda, \\ | \sqrt{\mu}  - \sqrt{\mu'} | < \delta } }  \| \Pi_\mu (a - \bar{a}) \Pi_{\mu'} \|_{HS}^2.
    \end{align*}
    Using that the Hilbert-Schmidt norm is basis independent, the previous equals \eqref{Expr: Lemma 2.1} for a fixed $\delta$. To deal with the first term, we can use the previous part of the proof. The latter term is dealt with by using Property (2) for $\{  \psi_n\}$.

    Finally, we prove (\emph{b}). By (\emph{a}) what remains is Property $(3)$ for $\tau\neq 0$. Hence, let $\tau\neq 0$ be given. We have for small enough $\delta$
    \begin{align*}
        &
        \sum_{ \substack{ j \neq k : \lambda_j , \lambda_k \leq \lambda, \\ | \lambda_j - \lambda_k - \tau| < \delta} } | \langle a \phi_j, \phi_k  \rangle |^2 = \sum_{\substack{ \sqrt{\mu}, \sqrt{\mu'} \leq \lambda, \\ | \sqrt{\mu} - \sqrt{\mu'} - \tau | < \delta }} \| \Pi_\mu a  \Pi_{\mu'} \|_{HS}^2 = \sum_{ \substack{ j \neq k : \lambda_j , \lambda_k \leq \lambda \\ | \lambda_j - \lambda_k - \tau| < \delta} } | \langle a \psi_j, \psi_k  \rangle |^2
        ;
    \end{align*}
    having $\delta$ small enough means that we will not need to consider diagonal elements. Using Property (3) for $\{ \psi_n\}$ gives the wanted conclusion.   
\end{proof}

\subsection{Results on billiards in rational polygons}
The phase space for billiards in a rational polygon $D$ is the unit cotangent bundle $S^{*}D$, which is the direct product 
\begin{equation*}
    S^{*}D = D\times S^1.
\end{equation*}
The normalised Liouville measure is identified as
\begin{equation*}
    d\mu(x,\omega) = \frac{1}{\area(D)} dx d\omega,
\end{equation*}
where $dx$ is the Lebesgue measure on $D$ and $d\omega$ is the Haar measure on $S^1$. The measure $d\mu$ is invariant under the billiard flow $\Phi^t$. This flow is defined in $S^{*}D$ through specular reflections for all trajectories not hitting the vertices of the polygon $D$. The reflection law for the trajectories hitting the vertices can be defined arbitrarily; however, these will form a set of measure zero and hence will be ignored in the following discussion. 

Let $G$ be the finite group generated by the linear parts of the reflections in the sides of the polygon $D$. That this group is finite follows from the assumptions that $D$ is simple and all vertex angles are rational multiplies of $\pi$. For each direction $\omega$, the set
\begin{equation*}
    D_\omega \coloneqq D\times \bigcup_{g\in G} \{g\omega\} 
\end{equation*}
is preserved by the flow. The directional flow $\Phi^t_{\omega}$ is the restriction of the flow in the direction $\omega$. Kerckhoff, Masur, and Smillie \cite{MR855297} established that for almost all directions the directional flow $\Phi^t_{\omega}$ is uniquely ergodic. More recently, Arana-Herrera, Chaika, and Forni \cite{aranaherrera2024weakmixingrationalbilliards} have further established that if $D$ does not belong to the set $\mathcal{E}$, then for almost all directions the directional flow is weakly mixing. Using this, we immediately obtain the following lemma.
\begin{lemma}\label{LE:consequence_weak_mixing}
    Let $D$ be a rational polygon that does not belong to $\mathcal{E}$ and let $\tau\in\R\setminus\{0\}$. Then for any $a\in C_0^\infty(D)$ we have
    \begin{equation*}
        \lim_{T\rightarrow \infty} \int_{S^*D} \bigg| \frac{1}{2T} \int_{-T}^{T} e^{it\tau} (  a\circ\Phi^t - \overline{a}) \, dt \bigg|^2 \, d\mu = 0,
    \end{equation*}
    where $\overline{a}$ is the integral of $a$ with respect to the normalised Lebesgue measure on $D$.
\end{lemma}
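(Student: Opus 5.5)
We will deduce the lemma from the weak mixing of the directional flows established by Arana-Herrera, Chaika, and Forni, combined with the product structure $S^*D = D\times S^1$ and dominated convergence in the direction variable. First, we disintegrate the Liouville measure along directions. Up to normalisation,
\begin{equation*}
\int_{S^*D} F \, d\mu = \int_{S^1} \frac{1}{\area(D)}\int_D F(x,\omega)\,dx\,d\omega .
\end{equation*}
For each $\omega$, the flow $\Phi^t$ preserves $D_\omega = D\times G\omega$. The normalised measure $\mu_\omega$ on $D_\omega$ is the average over $g\in G$ of $\frac{1}{\area(D)}dx\otimes\delta_{g\omega}$, and it is invariant under $\Phi^t_\omega$. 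Grouping the directions into $G$-orbits (a finite-to-one cover of $S^1/G$), we can therefore write
\begin{equation*}
\int_{S^*D}\Big|\tfrac{1}{2T}\int_{-T}^T e^{it\tau}(a\circ\Phi^t-\overline a)\,dt\Big|^2 d\mu = \int_{S^1} I_T(\omega)\, d\omega,
\end{equation*}
where, up to a harmless constant, $I_T(\omega)$ is the same $L^2$ norm taken with respect to $\mu_\omega$.

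Second, we treat a fixed good direction. For almost every $\omega$, the directional flow $\Phi^t_\omega$ is uniquely ergodic with invariant measure $\mu_\omega$ (Kerckhoff--Masur--Smillie) and weakly mixing (Arana-Herrera--Chaika--Forni, using $D\notin\mathcal E$). Note that the space average of $a$ with respect to $\mu_\omega$ is exactly $\overline a$, because $a$ depends only on $x$ and each slice carries the normalised Lebesgue measure. Weak mixing means the flow has no nonconstant eigenfunctions, so the Koopman unitary group $U_t f = f\circ\Phi^t_\omega$ on $L^2(\mu_\omega)$ has no point spectrum apart from the constants at eigenvalue $0$. Set $f=a-\overline a$, which has mean zero. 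We apply the von Neumann mean ergodic theorem to the twisted group $e^{it\tau}U_t$. By the spectral theorem, $\frac{1}{2T}\int_{-T}^T e^{it\tau}U_t f\,dt$ converges in $L^2(\mu_\omega)$ to the projection of $f$ onto the eigenspace $\{U_t h = e^{-it\tau}h\}$. For $\tau\neq0$ this eigenspace is trivial by weak mixing, and $f\perp$ constants in any case. Hence $I_T(\omega)\to 0$ for almost every $\omega$. Concretely, with $\sigma_f$ the spectral measure of $f$, we have $I_T(\omega)=\int \big|\frac{\sin(T(s+\tau))}{T(s+\tau)}\big|^2 d\sigma_f(s)$, and dominated convergence gives the limit $\sigma_f(\{-\tau\})=0$.

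Third, we integrate over directions. Since $|I_T(\omega)|\le 4\|a\|_\infty^2$ uniformly in $T$ and $\omega$, dominated convergence on $S^1$ yields the claimed limit.

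The main obstacle is the first step: making the identification between the full-phase-space integral and the directional $L^2(\mu_\omega)$ norms rigorous. This requires checking that $\Phi^t$ restricted to $D_\omega$ is exactly the directional flow considered in the cited works, i.e.\ the straight-line flow on the translation surface obtained by unfolding. It also requires checking that $\mu_\omega$ corresponds to Lebesgue measure on that surface, so that weak mixing there transfers to $L^2(\mu_\omega)$. Our first approach is to invoke the standard Katok--Zemlyakov unfolding, under which $D_\omega$ with $\mu_\omega$ is measurably isomorphic to the translation surface with its area measure. The trajectories through vertices form a null set and can be discarded.
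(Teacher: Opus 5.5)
Your proof is correct, and its outer structure is the paper's. Your first step, using $G$-invariance of $d\omega$ to write the phase-space integral as $\int_{S^1}\|F_T\|^2_{L^2(\mu_\omega)}\,d\omega$, is exactly the paper's first equality. Your final dominated convergence in $\omega$ is what the paper leaves implicit.

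The routes differ in the per-direction step. The paper expands the square into a double time integral and discards the phase $e^{i(t-s)\tau}$ by the triangle inequality. What remains is a Ces\`aro average of $|\langle a\circ\Phi^{t-s},a\rangle_{\mu_\omega}-\overline{a}^{2}|$; the cross term vanishes by invariance of $\mu_\omega$. This average tends to zero by the correlation characterisation of weak mixing. You instead use Stone's theorem to get
\[
I_T(\omega)=\int \Bigl|\frac{\sin(T(s+\tau))}{T(s+\tau)}\Bigr|^2 d\sigma_f(s)\longrightarrow \sigma_f(\{-\tau\}).
\]

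Your route is more precise. It identifies the limit of the lemma's integral exactly as $\int_{S^1}\sigma_{f,\omega}(\{-\tau\})\,d\omega$. So for a fixed $\tau$ it only needs $-\tau$ not to be an eigenvalue of almost every directional flow, rather than full weak mixing.

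The paper's route instead gives a bound independent of $\tau$, so the convergence is uniform in $\tau$. That uniformity is what the quantifiers in Theorem~\ref{thm:main_2} need: there $\delta=\pi/(2T)$ must work for all $\tau$ at once, and $T$ is chosen from the lemma. As written, your argument gives only fixed-$\tau$ convergence. That suffices for the lemma as stated, but it can be upgraded cheaply:
\begin{itemize}
\item An atomless finite measure has a uniformly continuous distribution function, so $\sup_{\tau}\sigma_f([-\tau-\epsilon,-\tau+\epsilon])\to 0$ as $\epsilon\to 0$.
\item Hence $\sup_\tau I_T(\omega)\le \sup_\tau\sigma_f([-\tau-\epsilon,-\tau+\epsilon])+(T\epsilon)^{-2}\sigma_f(\R)$.
\item Then apply dominated convergence to $\sup_\tau I_T(\omega)$.
\end{itemize}

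Finally, you do not need unique ergodicity from Kerckhoff--Masur--Smillie: weak mixing of the directional flow with respect to $\mu_\omega$ is all your argument uses.
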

\begin{proof}
For notational convenience, we will suppose that $a$ is real valued, to avoid some complex conjugates. Since $G$ is a finite group and the Haar measure in $S^1$ is invariant under the action of elements in $G$, we have the following
\begin{equation*}
    \begin{aligned}
     \MoveEqLeft \int_{S^*D} \bigg| \frac{1}{2T} \int_{-T}^{T} e^{it\tau} (  a\circ\Phi^t(x,\omega) - \overline{a}) \, dt \bigg|^2 \, d\mu 
     \\
     ={}& \frac{1}{\# G} \sum_{g\in G}\int_{S^*D} \bigg| \frac{1}{2T} \int_{-T}^{T} e^{it\tau} (  a\circ\Phi^t(x,g\omega) - \overline{a}) \, dt \bigg|^2 \, d\mu
     \\
     ={}& \Bigg| \frac{1}{\# G} \sum_{g\in G}\int_{S^*D}  \frac{1}{4T^2} \int_{-T}^{T}\int_{-T}^{T} e^{i(t-s)\tau} (  a\circ\Phi^t(x,g\omega) - \overline{a})(  a\circ\Phi^s(x,g\omega) - \overline{a}) \, dt ds d\mu \Bigg|
     \\
     \leq{}& \int_{S^1} \frac{1}{4T^2} \int_{-T}^{T}\int_{-T}^{T} \Bigg| \frac{1}{\# G \area(D)}  \sum_{g\in G} \int_D a\circ\Phi^t(x,g\omega) a\circ\Phi^s(x,g\omega) \,dx  - \overline{a}^2 \Bigg| \, dtds d\omega
     \\
     &+2 \int_{S^1} \frac{1}{2T} \int_{-T}^{T}  \Bigg| \frac{1}{\# G \area(D)} \sum_{g\in G}\int_{D}  a\circ\Phi^t(x,g\omega) \overline{a} \, dx  - \overline{a}^2 \Bigg| \, dt d\omega 
     \end{aligned}
\end{equation*}
Since $a$ is independent of $\omega$, it follows from weak mixing for almost all directions that the two integrals on the left-hand side go to zero as $T$ goes to infinity. 
\end{proof}

The next lemma is a version of Lemma~{2} from \cite{ZZ96}, where a proof can also be found. 

\begin{lemma}\label{LE:Set_for_flow}
    Let $D$ be a rational polygon. Then for any $T>0$ there exists a subset $X_T \subset S^{*}D$, such that $\Phi^t$ is well defined for all $|t|\leq T$. Moreover, the set $X_T$ is open and has full measure.
\end{lemma}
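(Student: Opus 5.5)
The plan is to describe the bad set explicitly and then show that its complement is open and has full measure. A point $(x,\omega)\in S^*D$ fails to have a well-defined flow up to time $T$ exactly when the straight-line trajectory from $x$ in direction $\omega$, unfolded by specular reflection, reaches a vertex of $D$ within time $|t|\leq T$. So I will set
\begin{equation*}
    X_T = \{(x,\omega)\in S^*D : \text{the billiard trajectory through } (x,\omega) \text{ meets no vertex of } D \text{ for } |t|\leq T\},
\end{equation*}
and check the two properties separately.

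For full measure, I will use the unfolding picture. Since $D$ is a rational polygon, the direction set $\{g\omega : g\in G\}$ of an orbit is finite. For fixed $\omega$, a trajectory of length at most $T$ crosses only finitely many sides. The number of these crossings is bounded uniformly in $(x,\omega)$: there are only finitely many combinatorial "reflection words" of length $\leq T$, by the standard bound on the number of generalized diagonals or saddle connections of bounded length (Masur's bound suffices, or simply compactness plus finitely many sides).

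For each such word $w$ and each vertex $v$, the set of $(x,\omega)$ whose unfolded segment hits the image of $v$ is contained in a finite union of smooth hypersurfaces of codimension one in $D\times S^1$. These are the points $x$ lying on the line through the unfolded vertex in direction $\omega$, within distance $T$. The bad set is therefore a countable, indeed finite, union of measure-zero sets, so $X_T$ has full measure.

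For openness, I will argue that if the trajectory avoids all vertices on $[-T,T]$, then it stays at positive distance $\eta>0$ from them, by compactness of the time interval. It also crosses the sides transversally, or the side hits are interior points. By continuous dependence of straight-line motion and reflections on initial data away from vertices, nearby initial conditions follow the same reflection sequence and remain at distance $\geq\eta/2$ from the vertices. I would prove this by induction on the finitely many reflections.

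The main obstacle is one subtlety in the openness argument: trajectories grazing along a side, where $\omega$ is parallel to a side and the point lies on it. In that case small perturbations can change the reflection sequence. I will handle this by including boundary points of $D$ (or interpreting $S^*D$ with $x$ in the open polygon) and by noting that a grazing trajectory that runs along a side necessarily reaches a vertex within finite time. Such a trajectory either reaches that vertex before time $T$, so it is already excluded, or else we also exclude side-parallel grazing, which is a null set. With this, the remaining argument is routine, following Lemma~2 of \cite{ZZ96}.
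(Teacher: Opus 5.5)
Your argument is correct and is essentially the standard proof of Lemma~2 in \cite{ZZ96}, which the paper cites rather than reproving: the vertex-hitting set lies in a countable union of null sets (unfold along each finite reflection word and apply Fubini), and its complement is open by compactness of $[-T,T]$ together with continuous dependence through finitely many transversal reflections. The appeal to Masur's bound is unnecessary: countability of the set of finite reflection words already suffices for the measure statement, and finiteness of the number of reflections along a single vertex-avoiding trajectory follows because reflections can only accumulate at a vertex.
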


\subsection{Some results from microlocal analysis}
In this subsection, we recall some notation and results from microlocal analysis that will be needed later. For an introduction to microlocal analysis, see, e.g. \cite{MR0618463,MR2304165}. Firstly, by $\Psi_{\mathrm{phg}}$, we will denote the set of zeroth order polyhomogeneous pseudo-differential operators. For an operator $A\in \Psi_{\mathrm{phg}}$ we denote the principal symbol by $\sigma_0(A)$ and we denote by $WF(A)$ the invariantly defined essential support of the full symbol. 
The following three lemmas are Lemma {3}, {4}, and {5} in \cite{ZZ96}, where proofs can also be found.

\begin{lemma}\label{LE:weak_convergence}
    Let $D$ be a rational polygon, and let $-\Delta_D$ be the Dirichlet Laplacian on $D$. For any orthonormal basis of eigenfunctions of $-\Delta_D$,
    \begin{equation*}
        \frac{1}{\mathcal{N}(\sqrt{-\Delta_D},\lambda)} \sum_{\lambda_j\leq \lambda} |\psi_j|^2 \underset{\lambda \rightarrow \infty}{\longrightarrow} 1 \qquad \text{{\textit weakly in }} L^1(D).
    \end{equation*}
\end{lemma}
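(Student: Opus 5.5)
The statement is a local Weyl law: the averaged densities $\mathcal{N}(\sqrt{-\Delta_D},\lambda)^{-1}\sum_{\lambda_j\le\lambda}|\psi_j|^2$ should converge weakly in $L^1(D)$ to the constant $1$, normalised so that $\int_D 1\,dx/\area(D)$ matches. Since $D$ is bounded, $L^\infty(D)$ is the dual of $L^1(D)$. So weak convergence means: for every $f\in L^\infty(D)$,
\begin{equation*}
\frac{1}{\mathcal{N}(\sqrt{-\Delta_D},\lambda)}\sum_{\lambda_j\le\lambda}\int_D f|\psi_j|^2\,dx \longrightarrow \frac{1}{\area(D)}\int_D f\,dx .
\end{equation*}
The plan is to prove this first for $f\in C_c^\infty(D)$ and then extend to all of $L^\infty(D)$. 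The extension uses that the measures $\nu_\lambda = \mathcal{N}^{-1}\sum_{\lambda_j\le\lambda}|\psi_j|^2dx$ are probability measures (each $\psi_j$ is $L^2$-normalised) that are uniformly absolutely continuous.

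\textbf{Step 1: smooth compactly supported $f$.} Fix $f\in C_c^\infty(D)$. I would write $\sum_{\lambda_j\le\lambda}\langle f\psi_j,\psi_j\rangle=\operatorname{tr}(f\,\mathbf 1_{[0,\lambda]}(\sqrt{-\Delta_D}))$. This depends only on the spectral projector, hence is basis independent. Because $\supp f$ stays a positive distance from $\partial D$, the interior local Weyl law applies: via a Tauberian argument on the wave trace $\operatorname{tr}(f\cos(t\sqrt{-\Delta_D}))$ for small $|t|$, as in Hörmander's method, one gets
\begin{equation*}
\operatorname{tr}\bigl(f\,\mathbf 1_{[0,\lambda]}(\sqrt{-\Delta_D})\bigr)=\frac{\lambda^2}{4\pi}\int_D f\,dx+o(\lambda^2).
\end{equation*}
Finite propagation speed ensures that, for $|t|$ less than the distance from $\supp f$ to $\partial D$, the wave kernel near $\supp f$ agrees with the free one, so the boundary plays no role. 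Combined with the global Weyl law $\mathcal{N}(\sqrt{-\Delta_D},\lambda)\sim \area(D)\lambda^2/(4\pi)$ for polygons, this gives the claimed limit for such $f$.

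\textbf{Step 2: from $C_c^\infty$ to $L^\infty$.} Taking $f=\chi$ with $0\le\chi\le 1$, $\chi\in C_c^\infty(D)$ and $\int\chi$ close to $\area(D)$, Step 1 shows that $\nu_\lambda(K)\to|K|/\area(D)$ for closed $K\subset D$ up to arbitrarily small error. Hence no mass escapes to the boundary: $\limsup\nu_\lambda(D\setminus \supp\chi)\le 1-\int\chi/\area(D)$.

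Next I would obtain the comparison $\nu_\lambda(E)\lesssim |E|$ uniformly. For open sets this follows from Step 1 together with the mass control near the boundary. For a general measurable $E$ one needs a bit more care. I would handle it by approximating $f\in L^\infty$ in $L^1$ by $C_c^\infty$ functions, say $\|f-g\|_{L^1}<\epsilon$. The difficulty is that $\nu_\lambda$ is only controlled against smooth test functions, so $\int |f-g|\,d\nu_\lambda$ is not obviously small. To fix this, I would use the upper bound $\limsup\nu_\lambda(U)\le|U|/\area(D)$ for open $U$, which follows from Step 1 by inner approximation with smooth cutoffs. By outer regularity, the set where $|f-g|$ is large can be covered by an open set of small measure. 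Splitting $|f-g|$ into a small uniform part and a bounded part supported on that open set then gives $\limsup\int|f-g|\,d\nu_\lambda\lesssim\epsilon$.

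\textbf{Main obstacle.} The step I expect to be most delicate is Step 1, the local Weyl law with only an $o(\lambda^2)$ remainder in a non-smooth domain. The resolution is exactly the interior localisation: the boundary and corners never enter for compactly supported $f$ and short times. The $L^\infty$ extension is then measure-theoretic bookkeeping.
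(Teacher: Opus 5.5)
\textbf{There is a genuine gap in your Step 2.} Your Step 1 is fine. The interior trace form of the local Weyl law, together with the global Weyl law for $D$, gives $\int f\,d\nu_\lambda\to\area(D)^{-1}\int_D f\,dx$ for $f\in C_c^\infty(D^\circ)$. Combined with $\nu_\lambda(D)=1$, this yields convergence against every $f\in C(\overline D)$.

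The problem is the inequality $\limsup_\lambda\nu_\lambda(U)\le|U|/\area(D)$ for open $U$. It does not follow from Step 1, because your approximation runs in the wrong direction:
\begin{itemize}
\item Cutoffs $0\le\chi\le\mathbf 1_U$ only give the \emph{lower} bound $\liminf_\lambda\nu_\lambda(U)\ge\area(D)^{-1}\int\chi\,dx$.
\item Continuous $\chi\ge\mathbf 1_U$ only give $\limsup_\lambda\nu_\lambda(U)\le|\overline U|/\area(D)$. This is useless when, say, $U$ is an open dense set of small measure.
\end{itemize}
This is not a technicality. Convergence of $\nu_\lambda$ against continuous functions does not by itself imply weak $L^1$ convergence of the densities $\rho_\lambda=\mathcal N(\sqrt{-\Delta_D},\lambda)^{-1}\sum_{\lambda_j\le\lambda}|\psi_j|^2$. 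By Dunford--Pettis, that requires uniform integrability. Consider probability densities that put almost all their mass on a fixed open dense set $U$ with $|U|<\area(D)/2$, yet still converge weakly-$*$ to normalised Lebesgue measure. They are compatible with everything Step 1 gives you, but they violate the claim for $f=\mathbf 1_U$. So measure-theoretic bookkeeping alone cannot close the argument.

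\textbf{The missing ingredient is pointwise control of $\rho_\lambda$.} There are two ways to get it.
\begin{itemize}
\item The standard route, behind the cited Lemma 3 of Zelditch--Zworski, is the local Weyl law for the spectral function itself: $\sum_{\lambda_j\le\lambda}|\psi_j(x)|^2=\frac{\lambda^2}{4\pi}+O_K(\lambda)$ uniformly for $x$ in a compact $K\subset D^\circ$ (H\"ormander). It is proved by the same finite-propagation-speed wave argument you sketch, applied to the kernel on the diagonal rather than to the trace. Then $\rho_\lambda\to\area(D)^{-1}$ uniformly on $K$, so $\nu_\lambda(D\setminus K)\to|D\setminus K|/\area(D)$. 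For $f\in L^\infty(D)$, split into $\int_K$, handled by uniform convergence, and $\int_{D\setminus K}$, bounded by $\|f\|_\infty\nu_\lambda(D\setminus K)$ and small for $K$ large.
\item Alternatively, an elementary uniform bound suffices. By domain monotonicity of the Dirichlet heat kernel, $\sum_{\lambda_j\le\lambda}|\psi_j(x)|^2\le e^{t\lambda^2}p^D_t(x,x)\le e^{t\lambda^2}(4\pi t)^{-1}$. Taking $t=\lambda^{-2}$ gives $\sup_x\rho_\lambda(x)\le e/\area(D)+o(1)$. Now approximate $f\in L^\infty$ in $L^1$ by $g\in C_c^\infty(D^\circ)$. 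The estimate $|\int(f-g)\rho_\lambda\,dx|\le\|\rho_\lambda\|_\infty\|f-g\|_{L^1}$ then finishes the argument from Step 1.
\end{itemize}
For the way the lemma is actually used in the proof of Theorem~\ref{thm:main_2}, only test functions like $1-\varphi_3^\nu\in C(\overline D)$ appear. For those, your Step 1 plus $\nu_\lambda(D)=1$ is already enough.
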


\begin{lemma}\label{LE:Weyl_law_gene}
     Let $D$ be a rational polygon, and let $A\in \Psi_{\mathrm{phg}}(D)$ with Schwartz kernel compactly supported in $D^{\circ}\times D^{\circ}$. Then with the notation of Lemma~\ref{LE:weak_convergence} we have
     \begin{equation*}
          \lim_{\lambda\rightarrow \infty}\frac{1}{\mathcal{N}(\sqrt{-\Delta_D},\lambda)} \sum_{\lambda_j\leq \lambda} \langle A\psi_j,\psi_j\rangle = \int_{S^{*}D} \sigma_0(A) \, d\mu.
     \end{equation*}
\end{lemma}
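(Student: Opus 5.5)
We prove Lemma~\ref{LE:Weyl_law_gene} by reducing it to Lemma~\ref{LE:weak_convergence}. The reduction uses local Weyl laws for the smoothed spectral measure, in the style of Zelditch–Zworski. The key point is that $A$ has Schwartz kernel compactly supported in $D^\circ\times D^\circ$. Hence only the interior wave kernel near the diagonal matters, and boundary effects (including the vertices) enter only through singularities at nonzero times.

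\textbf{Step 1: diagonal reduction.} Write $A=\mathrm{Op}(a)+R$, where $R$ is smoothing with kernel compactly supported in the interior. Then
\begin{equation*}
\sum_{\lambda_j\le\lambda}\langle R\psi_j,\psi_j\rangle=\int K_R(x,y)\,E_\lambda(y,x)\,dx\,dy,
\end{equation*}
where $E_\lambda$ is the spectral projector kernel. On compact interior sets one has $|E_\lambda(x,y)|\le \big(\sum|\psi_j(x)|^2\big)^{1/2}\big(\sum|\psi_j(y)|^2\big)^{1/2}$. By Cauchy–Schwarz and Lemma~\ref{LE:weak_convergence}, the $R$-contribution is $O(\mathcal N(\lambda))$. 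More precisely, it is $o(\mathcal N(\lambda))$: by the interior local Weyl law the off-diagonal part of $E_\lambda$ is $o(\lambda^2)$ away from the diagonal, and $R$ has order $-\infty$, so its principal symbol is zero.

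\textbf{Step 2: Tauberian reduction.} It suffices to compute the asymptotics of the trace $\mathrm{Tr}\,A\,\mathbf 1_{[0,\lambda]}(\sqrt{-\Delta_D})$. Pick $\rho\in\mathcal S(\R)$ with $\hat\rho$ supported in $(-\epsilon,\epsilon)$ and $\hat\rho\equiv1$ near $0$, and consider
\begin{equation*}
\rho*d\big(\mathrm{Tr}\,A E_\lambda\big)=\frac1{2\pi}\int\hat\rho(t)e^{it\lambda}\,\mathrm{Tr}\,A e^{-it\sqrt{-\Delta_D}}\,dt.
\end{equation*}
Choose $\epsilon$ smaller than the distance from $\supp K_A$ to $\partial D$. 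By finite propagation speed, for $|t|<\epsilon$ the kernel of $e^{-it\sqrt{-\Delta_D}}$ restricted to $\supp K_A$ coincides with the free-space half-wave kernel. The standard Hörmander parametrix then gives
\begin{equation*}
\rho*d\,\mathrm{Tr}(AE_\lambda)=(2\pi)^{-2}\lambda\int_{S^*D}\sigma_0(A)\,d\omega\,dx+O(1).
\end{equation*}

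\textbf{Step 3: Tauberian theorem.} Here lies the main obstacle. The Hörmander–Tauberian argument requires positivity, or control of $\mathrm{Tr}\,AE_\lambda$ by a positive counting function. To handle this, split $A$ by Gårding-type arguments: write $\sigma_0(A)=\sigma_+-\sigma_-+i(\cdots)$, and for real nonnegative symbols use $A=B^*B+C\cdot\chi+$ lower order, where $\chi\in C_c^\infty(D^\circ)$ is a cutoff with $\chi\ge$ the relevant constant. This reduces monotonicity to multiplication by $\chi\ge0$, where $\sum\chi|\psi_j|^2$ is nondecreasing in $\lambda$. The lower-order errors are $o(\mathcal N(\lambda))$ by the same bound as in Step 1, using Lemma~\ref{LE:weak_convergence}. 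The Tauberian theorem then converts Step 2 into $\mathrm{Tr}\,AE_\lambda=(2\pi)^{-2}\lambda^2\int\sigma_0(A)+o(\lambda^2)$. Finally, divide by Weyl's law $\mathcal N(\lambda)\sim \area(D)\lambda^2/4\pi$, which is itself obtained by taking $A=\chi$ and letting $\chi\uparrow1$ via Lemma~\ref{LE:weak_convergence}. This yields $\int_{S^*D}\sigma_0(A)\,d\mu$ with the normalisation $d\mu=dx\,d\omega/\area(D)$, once $d\omega$ is normalised to total mass one.
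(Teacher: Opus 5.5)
The paper gives no proof of this lemma; it imports it as Lemma~4 of \cite{ZZ96}. Your proposal supplies a self-contained derivation along the standard Hörmander route: a small-time interior wave trace, a free-space computation, a Tauberian theorem, and a positivity reduction $A=B^*B-C\chi^2+L$ with $L\in\Psi^{-1}$. Lemma~\ref{LE:weak_convergence} is used only to identify $\mathcal N(\lambda)$. The strategy is sound, and it shows clearly why the vertices play no role. However, two steps are justified incorrectly as written.

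\textbf{Finite propagation speed.} The half-wave group $e^{-it\sqrt{-\Delta_D}}$ does not have finite propagation speed. Already on $\R^2$, the kernel of $\sin(t\sqrt{-\Delta})$ is not supported in $|x-y|\le|t|$, since its Fourier transform $\sin(t|\xi|)$ is not smooth at $\xi=0$. So your claim that its kernel on $\supp K_A$ agrees with the free one for $|t|<\epsilon$ is false. Work with $\cos(t\sqrt{-\Delta_D})$ and an even $\rho$ instead. If $\supp K_A\subset K\times K$, free finite propagation speed plus uniqueness for the Dirichlet problem give $\cos(t\sqrt{-\Delta_D})A=\cos(t\sqrt{-\Delta_{\R^2}})A$ exactly for $|t|<d(K,\partial D)$, extending by zero. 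The extra terms $\rho(\lambda+\lambda_j)$ contribute only $O(\lambda^{-\infty})$. The free trace is then explicit, $\mathrm{Tr}\big(f(\sqrt{-\Delta_{\R^2}})\mathrm{Op}(a)\big)=(2\pi)^{-2}\iint f(|\xi|)a(x,\xi)\,dx\,d\xi$, so no parametrix is needed.

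\textbf{Lower-order remainders.} Your reasons why the smoothing remainder $R$ (Step 1) and the order $-1$ remainder $L$ (Step 3) contribute $o(\mathcal N(\lambda))$ do not hold up. Cauchy--Schwarz with Lemma~\ref{LE:weak_convergence} only gives $O(\mathcal N(\lambda))$. Moreover, $K_R$ does not vanish near the diagonal, where $E_\lambda(x,y)$ is of size $\lambda^2$, so an off-diagonal bound on $E_\lambda$ does not help. The correct reason is compactness. Both operators have order $\le -1$ and kernels compactly supported in $D^\circ\times D^\circ$, so they are compact on $L^2(D)$. Since $\psi_j\rightharpoonup 0$, this gives $\langle L\psi_j,\psi_j\rangle\to0$, and Cesàro means of a null sequence tend to zero.

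\textbf{Two smaller points.} First, the monotone Tauberian theorem uses an auxiliary $\rho\ge0$. That is incompatible with $\hat\rho\equiv1$ near $0$, which would force $\int s^2\rho(s)\,ds=0$. This is harmless, because your computation only uses $\hat\rho(0)=1$. Second, integrating $c\lambda+O(1)$ gives $\tfrac12(2\pi)^{-2}\lambda^2\int\sigma_0(A)\,dx\,d\omega$, not $(2\pi)^{-2}\lambda^2\int\sigma_0(A)$. This factor $\tfrac12$ is exactly what makes your final normalisation come out to $\int_{S^*D}\sigma_0(A)\,d\mu$.

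With these repairs, the argument is complete.
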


\begin{lemma}\label{LE:Egorov}
    Let $D$ be a rational polygon and let $A\in \Psi_{\mathrm{phg}}(D)$ with Schwartz kernel compactly supported in $D^{\circ}\times D^{\circ}$ and in addition $WF(A) \cap S^{*}D \Subset X_T$. If $\varphi \in C_0^\infty(D^{\circ})$, then for $t\leq |T|$,
    \begin{equation*}
        A_t^\varphi \coloneqq \varphi e^{it\sqrt{-\Delta_D}} A  e^{-it\sqrt{-\Delta_D}} \varphi \in \Psi_{\mathrm{phg}}(D^{\circ}),
    \end{equation*}
    and
    \begin{equation*}
        \sigma_0( A_t^\varphi) = (\pi^{*} \varphi)^2 \sigma_0(A)\circ \Phi^t,
    \end{equation*}
    where $\pi:S^*D \mapsto D$ is the natural projection.
\end{lemma}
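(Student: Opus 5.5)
The plan is to prove Lemma~\ref{LE:Egorov} by reducing it to the classical Egorov theorem through a short-time decomposition. The only new input is a local model of the Dirichlet wave group near an edge.

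First we exploit compactness. Since $K \coloneqq WF(A)\cap S^*D$ is compact and contained in the open set $X_T$ of Lemma~\ref{LE:Set_for_flow}, the trajectories $\Phi^s(K)$, $|s|\le T$, stay a positive distance $d>0$ from the vertices. These trajectories also meet the edges only transversally, because a line tangent to an edge runs into a vertex. We choose a microlocal partition of unity $A=\sum_m A_m + R$, with $R$ smoothing and each $A_m\in\Psi_{\mathrm{phg}}$ having wavefront set in a small conic neighbourhood of a point of $K$. We choose a step $h>0$, smaller than $d$ and smaller than the distance from $K$ to $D\setminus X_T$. With these choices, over any time interval of length $h$ each piece $\Phi^s(WF(A_m))$ either stays in $D^\circ$ or crosses exactly one edge, away from its endpoints.

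Next we handle a single step $|s|\le h$. We want to show that $\chi e^{is\sqrt{-\Delta_D}}B e^{-is\sqrt{-\Delta_D}}\chi$ is a pseudodifferential operator, modulo smoothing, with principal symbol $\sigma_0(B)\circ\Phi^s$ on the relevant set. Here $B$ is a piece with wavefront set of the above type and $\chi$ is a cutoff equal to $1$ near $\pi(\Phi^s(WF(B)))$.

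In the interior case, finite propagation speed shows that the Dirichlet propagator agrees, up to smoothing errors on the relevant region, with the free propagator $e^{is\sqrt{-\Delta_{\R^2}}}$. The classical Egorov theorem for Fourier integral operators associated with the straight-line flow then applies.

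In the edge case, we work locally in a half-plane $H$ bounded by the line containing the edge. The method of images applies: for data supported near the edge and away from the vertices, the Dirichlet solution coincides with the restriction to $H$ of the free solution with odd-reflected data, up to time $h$. The reason is that the domain of dependence sees only that straight edge. The propagator is therefore $e^{is\sqrt{-\Delta_{\R^2}}}(I-\mathcal{R})$ restricted to $H$, where $\mathcal{R}$ is reflection in the line. After conjugation, the cross term involving $\mathcal{R}$ has wavefront set equal to the graph of the mirror map. This term is removed by $\chi$, or is smoothing, because at each time only one of the two branches lies over the support of $\chi$. The surviving term is a pseudodifferential operator whose principal symbol is $\sigma_0(B)$ composed with the unfolded straight flow followed by the reflection, which is exactly $\sigma_0(B)\circ\Phi^s$.

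We then iterate. Write $e^{it\sqrt{-\Delta_D}}=(e^{i(t/N)\sqrt{-\Delta_D}})^N$ with $|t|/N<h$, and insert cutoffs $\chi_k$ that are equal to $1$ near $\pi\Phi^{kt/N}(K)$. The errors introduced by these insertions are smoothing, since the wavefront set of the intermediate operator lies over the region where $\chi_k=1$. This wavefront statement follows from the previous step, or equivalently from propagation of singularities for the Dirichlet wave equation away from the vertices, where the only singularities are transmitted and reflected ones. Composing $N$ short steps yields a pseudodifferential operator with principal symbol $\sigma_0(A)\circ\Phi^t$. Finally we multiply on both sides by $\varphi$, which contributes the factor $(\pi^*\varphi)^2$ and gives a compactly supported kernel in $D^\circ\times D^\circ$, as required.

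The main obstacle is the edge step: showing that the error terms are genuinely smoothing. This requires showing that no contribution from the far side of an edge, and no diffraction from a vertex, reaches the support of $\varphi$ within time $|t|\le T$. I would first establish this using finite propagation speed applied to the kernel of $A_m$ together with the uniform distance $d$ from the vertices. The uniformity needed here is supplied by compactness of $WF(A)\cap S^*D$ inside $X_T$.
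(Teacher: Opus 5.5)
You take a genuinely different route from the one the paper relies on. The paper gives no argument of its own: the lemma is quoted from Zelditch--Zworski \cite{ZZ96}, where it comes from the general microlocal theory of the mixed problem. Along rays in $X_T$ only transversal reflections at smooth boundary points occur. The Dirichlet wave group is therefore microlocally a Fourier integral operator attached to the broken flow, via the parametrix at hyperbolic points and propagation of singularities as in H\"ormander's Theorem~24.2.1. Egorov's theorem then follows from the FIO calculus. That argument uses no flatness and works for any piecewise smooth domain.

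You instead use that the metric is flat and the edges are straight. On a half-plane $H$ the Dirichlet half-wave group is exactly $r_H\, e^{is\sqrt{-\Delta_{\R^2}}}(I-\mathcal{R})\,e_0$, where $e_0$ is extension by zero and $r_H$ is restriction, because $-\Delta_{\R^2}$ preserves odd functions. Each reflection becomes an explicit image term, and only the free Egorov theorem is needed. This is more elementary and transparent, but specific to polygons.

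One correction in your edge step. Take an outer cutoff $\chi\in C_0^\infty(D^{\circ})$ and set $P=e^{is\sqrt{-\Delta_{\R^2}}}\tilde{B}e^{-is\sqrt{-\Delta_{\R^2}}}$, with $\tilde{B}$ the zero extension of $B$. The cross terms $\chi\mathcal{R}P\chi$ and $\chi P\mathcal{R}\chi$ are smoothing simply because $P$ is pseudolocal and $|x-\mathcal{R}y|\geq 2\,\mathrm{dist}(\supp\chi,\partial H)>0$ for $x,y\in\supp\chi$. Your observation that only one branch lies over $\supp\chi$ is what gives $\sigma_0\big(\chi(P+\mathcal{R}P\mathcal{R})\chi\big)=\chi^2\,\sigma_0(B)\circ\Phi^s$, not what removes the cross terms.

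The iteration, however, has a gap as written. With uniform times $kt/N$ and cutoffs $\chi_k$ equal to $1$ near $\pi\Phi^{kt/N}(K)$, some $\Phi^{kt/N}(K)$ will in general meet $S^{*}_{\partial D}D$. If $K$ has interior, the times at which one of its rays lies on an edge fill intervals whose length does not shrink with $N$. At such a time $\chi_k$ cannot lie in $C_0^\infty(D^{\circ})$, so the intermediate operator is not in $\Psi_{\mathrm{phg}}(D^{\circ})$. In the next step the image cross terms are then no longer smoothing.

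The fix is to slice time piece by piece:
\begin{enumerate}
\item For $A_m$ with small enough microsupport, the hitting times cluster in short windows around those of a central ray.
\item Consecutive windows are separated by a uniform gap, since consecutive hits lie on different edges at distance $\geq d$ from the vertices.
\item Choose non-uniform intermediate times outside these windows, with steps $<h$.
\item At the final time, use $\varphi$ itself as the outer cutoff rather than a $\chi_N$. This is exactly why the lemma is stated with $\varphi\in C_0^\infty(D^{\circ})$: $\Phi^t(K)$ may meet the boundary.
\end{enumerate}
Also, $h<d$ alone does not prevent crossing two edges in one step near a vertex of small angle. Take $h$ below the minimal time between consecutive hits.

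Finally, $e^{is\sqrt{-\Delta}}$ has no finite propagation speed. Domain-of-dependence arguments apply only to $\cos(s\sqrt{-\Delta})$ and $\sin(s\sqrt{-\Delta})/\sqrt{-\Delta}$. The identification of $e^{is\sqrt{-\Delta_D}}$ with $e^{is\sqrt{-\Delta_H}}$ near an edge therefore holds only modulo operators smoothing in the $\bigcap_N\mathcal{D}((-\Delta_D)^N)$ sense. You can obtain this as follows. Write $e^{is\lambda}$, $\lambda\geq 0$, as the even function $e^{is|\lambda|}$, and split its Fourier transform into two parts. The part supported in $|\tau|\leq 2h$ localises exactly through $\cos(\tau\sqrt{-\Delta})$. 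The remainder is smooth, and its inverse transform decays rapidly. With these changes your argument becomes a complete proof.
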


\section{Proof of Theorem~\ref{thm:main_1} and Theorem~\ref{thm:main_2}}

In this section, we prove Theorem \ref{thm:main_1} and Theorem \ref{thm:main_2}. We note that Property (1) has already been proven by Marklof and Rudnick \cite{MR2879311}. Hence, it remains to prove Properties (2) and (3). The proof will follow the approach of Zelditch and Zworski \cite{ZZ96} with some small changes in the beginning and end of the argument. Hence, we will give the full proof with these small modifications.

\begin{proof}[Proof of Theorem \ref{thm:main_2}]
We will start by letting $\varepsilon>0$, $\tau\in\R \setminus\{0\}$ and $a\in C_0^\infty(D)$ be given. The case $\tau=0$ is covered by Theorem~\ref{thm:main_1} Property (2).

To prove the result, we fix $T>0$ to be chosen arbitrary large later. We let $\{U_\eta\}_{\eta\in(0,\eta_0]}$ be a nested family of open sets such that 
\begin{equation*}
    U_\eta \Subset X_{2T}\setminus S^{*}_{\partial D_{reg}} D,\qquad \bigcup_{\eta\in(0,\eta_0]} U_\eta = X_{2T}, \qquad\text{and}\qquad \overline{U}_{\eta'} \subset U_\eta \qquad\text{for $\eta<\eta'$},
\end{equation*} 
where $X_{2T}$ is the set from Lemma~\ref{LE:Set_for_flow}. Then there exist operators $E_\eta \in \Psi^0_{\mathrm{phg}}(D^{\circ})$ with Schwartz kernel compactly supported in $D^{\circ}\times D^{\circ}$ such that
\begin{equation*}
    WF(E_\eta) \cap S^{*}D^{\circ} \subset U_{2\eta}, \qquad WF(E_\eta - I) \cap S^{*}D^{\circ} \subset S^{*}D^{\circ} \setminus U_{\eta}, \qquad\text{and}\qquad 0\leq \sigma_0(E_\eta)\leq 1.
\end{equation*}
We then set $A_\eta = a E_\eta $, which will have principal symbol $a\sigma_0(E_\eta)$. With this operator, we have
\begin{equation}\label{EQ_main_prop_3_1}
    \sum_{\substack{j\neq k: \lambda_j,\lambda_k\leq \lambda,  \\ |\lambda_{j}-\lambda_{k}-\tau| < \delta}} \big| \langle a \psi_j , \psi_k\rangle  \big|^2 
    \leq 2\sum_{\substack{j\neq k: \lambda_j,\lambda_k\leq \lambda,  \\ |\lambda_{j}-\lambda_{k}-\tau| < \delta}} \big| \langle A_\eta \psi_j , \psi_k\rangle  \big|^2 + \big| \langle (a- A_\eta) \psi_j , \psi_k\rangle  \big|^2.
\end{equation}
For the sum over the second term on the right-hand side of \eqref{EQ_main_prop_3_1} we have Parseval's identity and Lemma~\ref{LE:Weyl_law_gene} so that
\begin{equation}\label{EQ_main_prop_3_2}
    \begin{aligned}
    \MoveEqLeft \frac{1}{\mathcal{N}(\sqrt{-\Delta_D},\lambda)}\sum_{\substack{j\neq k: \lambda_j,\lambda_k\leq \lambda,  \\ |\lambda_{j}-\lambda_{k}-\tau| < \delta}} \big| \langle (a- A_\eta) \psi_j , \psi_k\rangle \big|^2 
    \\
    &\leq  \frac{1}{\mathcal{N}(\sqrt{-\Delta_D},\lambda)} \sum_{ \lambda_j \leq \lambda } \langle (a- A_\eta)^{*}(a- A_\eta) \psi_j , \psi_j\rangle
    \underset{\lambda \rightarrow \infty}{\longrightarrow} \int_{S^{*}D} |a-a\sigma_0(E_\eta) |^2 \, d\mu.
    \end{aligned}
\end{equation}
From Lemma~\ref{LE:Set_for_flow} we have that $X_{2T}$ has full measure for all $T>0$. Since $\sigma_0(E_\eta)=1$ on $U_\eta$ it follows that $\lim_{\eta\rightarrow 0} a\sigma_0(E_\eta)=a$ almost everywhere. Since $a-a\sigma_0(E_\eta)$ is uniformly bounded, it follows from \eqref{EQ_main_prop_3_2} using dominated convergence that
\begin{equation}\label{EQ_main_prop_3_3}
    \limsup_{\lambda\rightarrow\infty}  \sum_{\substack{j\neq k: \lambda_j,\lambda_k\leq \lambda,  \\ |\lambda_{j}-\lambda_{k}-\tau| < \delta}} \big| \langle a \psi_j , \psi_k\rangle  \big|^2 
    \leq h_1(T,\eta) + 2 \limsup_{\lambda\rightarrow\infty}\sum_{\substack{j\neq k: \lambda_j,\lambda_k\leq \lambda,  \\ |\lambda_{j}-\lambda_{k}-\tau| < \delta}} \big| \langle A_\eta \psi_j , \psi_k\rangle  \big|^2, 
\end{equation}
where $h_1(T,\eta) \rightarrow 0$ as $\eta\rightarrow0$. 
For our fixed $T$ we set $\delta = \frac{\pi}{2T}$ and let $\bar{a}$ be $a$ integrated with respect to the normalised Lebesgue measure on $D$. With these choices, we notice that
\begin{equation}\label{EQ_main_prop_3_4}
\begin{aligned}
    \sum_{\substack{j\neq k: \lambda_j,\lambda_k\leq \lambda,  \\ |\lambda_{j}-\lambda_{k}-\tau| < \delta(\varepsilon)}} \big| \langle A_\eta \psi_j , \psi_k\rangle  \big|^2 
    & = \sum_{\substack{j\neq k: \lambda_j,\lambda_k\leq \lambda,  \\ |\lambda_{j}-\lambda_{k}-\tau| < \delta(\varepsilon)}} \big| \langle (A_\eta - \bar{a}) \psi_j , \psi_k\rangle  \big|^2 
    \\
    &\leq 4\sum_{\substack{j\neq k: \lambda_j,\lambda_k\leq \lambda,  \\ |\lambda_{j}-\lambda_{k}-\tau| < \delta(\varepsilon)}} \big| \langle (A_\eta - \bar{a}) \psi_j , \psi_k\rangle  \big|^2 \Big|\tfrac{\sin((\lambda_{j}-\lambda_{k}-\tau) T)}{(\lambda_{j}-\lambda_{k}-\tau) T}\Big|^2 
    \\
    &= 4 \sum_{\substack{j\neq k: \lambda_j,\lambda_k\leq \lambda,  \\ |\lambda_{j}-\lambda_{k}-\tau| < \delta(\varepsilon)}} \big| \langle A_\eta(T,\tau) \psi_j , \psi_k\rangle  \big|^2.
    \end{aligned}
\end{equation}
where
\begin{equation*}
    A_\eta(T,\tau) = \frac{1}{2T} \int_{-T}^T e^{it\tau} e^{-i t \sqrt{\Delta_D}} (A_\eta - \bar{a}) e^{i t \sqrt{\Delta_D}} \,dt.
\end{equation*}
By Parseval's identity, we obtain the upper bound  
\begin{equation}\label{EQ_main_prop_3_5}
         \sum_{\substack{j\neq k: \lambda_j,\lambda_k\leq \lambda,  \\ |\lambda_{j}-\lambda_{k}-\tau| < \delta}} \big| \langle A_\eta(T,\tau) \psi_j , \psi_k\rangle  \big|^2 
         \leq \sum_{\lambda_j \leq \lambda} \langle ( A_\eta(T,\tau) )^{*}A_\eta(T,\tau) \psi_j , \psi_j\rangle. 
\end{equation}
For $\nu>0$ we introduce the functions $\varphi_i^\nu \in C_0^\infty(D^{\circ}; [0,1])$, $i\in\{1,2,3\}$ such that
\begin{equation*}
    d(\supp(\varphi_i^\nu -1),\partial D)<\delta \text{ for all $i\in\{1,2,3\}$} \qquad\text{and}\qquad \varphi_i^\nu=1 \text{ on $\supp (\varphi_{i+1}^\nu)$ for $i\in\{1,2\}$}.
\end{equation*} 
With these functions and by writing $B_{\eta,T,\tau} =( A_\eta(T,\tau))^{*}A_\eta(T,\tau) $ we have for all $j$
\begin{equation*}
    \begin{aligned}
    \langle B_{\eta,T,\tau} \psi_j , \psi_j\rangle = {}&  
    \langle \varphi_1^\nu B_{\eta,T,\tau} \varphi_2^\nu \psi_j , \psi_j\rangle
    +\langle (1-\varphi_1^\nu) B_{\eta,T,\tau} \varphi_2^\nu \psi_j , \psi_j\rangle
    \\
    &+\langle \varphi_3^\nu B_{\eta,T,\tau}(1- \varphi_2^\nu) \psi_j , \psi_j\rangle
    +\langle (1-\varphi_3^\nu) B_{\eta,T,\tau}(1- \varphi_2^\nu) \psi_j , \psi_j\rangle.
    \end{aligned}
\end{equation*}
As observed in \cite{ZZ96} it follows from the theorem on propagation of singularities along bicharacteristics intersecting the boundary transversally (see \cite[Theorem 24.2.1]{MR2304165}) that the operators $(1-\varphi_1^\nu) B_{\eta,T,\tau} \varphi_2^\nu $ and $\varphi_3^\nu B_{\eta,T,\tau}(1- \varphi_2^\nu)$ have kernels in $C^\infty(D\times D)$. Hence for any $N\in\N$, multiplying by the identity operator $I=(I-\Delta_D)^{-N}(I-\Delta_D)^{N}$ and using that $\psi_j$ is an eigenvector of $-\Delta_D$ we obtain
\begin{equation}
    \big|\langle (1-\varphi_1^\nu) B_{\eta,T,\tau} \varphi_2^\nu \psi_j , \psi_j\rangle +\langle \varphi_3^\nu B_{\eta,T,\tau}(1- \varphi_2^\nu) \psi_j , \psi_j\rangle \big| \leq C_{T,\nu,\eta,N} \lambda_j^{-N}.
\end{equation}
Now, using this observation and applying Lemma~\ref{LE:Egorov} in combination with Lemma~\ref{LE:weak_convergence} and Lemma~\ref{LE:Weyl_law_gene}, we obtain
\begin{equation*}
    \begin{aligned}
     \MoveEqLeft 
     \limsup_{\lambda\rightarrow\infty}\frac{1 }{\mathcal{N}(\sqrt{-\Delta_D},\lambda)} \sum_{\lambda_j \leq \lambda} \langle B_{\eta,T,\tau} \psi_j , \psi_j\rangle
     \\
    &\leq \int_{S^*D} \bigg| \frac{1}{2T} \int_{-T}^{T} e^{it\tau} \pi^{*}\varphi^\nu_2 (\sigma_0(A_\eta)\circ\Phi^t - \bar{a}) \, dt \bigg|^2 \, d\mu + C_{T,\eta} \int_{D} (1-\varphi_3^\nu(x)) \, dx 
    \end{aligned}
\end{equation*}
Combining this upper bound with \eqref{EQ_main_prop_3_5} we obtain
\begin{equation}\label{EQ_main_prop_3_6}
    \begin{aligned}
        \MoveEqLeft \limsup_{\lambda\rightarrow\infty} \frac{1}{\mathcal{N}(\sqrt{\Delta_D},\lambda)} \sum_{\substack{j\neq k: \lambda_j,\lambda_k\leq \lambda  \\ |\lambda_{j}-\lambda_{k}-\tau| < \delta}} \big| \langle A_\eta(T,\tau) \psi_j , \psi_k\rangle  \big|^2 
        \\
        &\leq  
        \int_{S^*D} \bigg| \frac{1}{2T} \int_{-T}^{T}e^{it\tau} ( \sigma_0(A_\eta)\circ\Phi^t - \bar{a}) \, dt \bigg|^2 \, d\mu  + h_{2}(T,\eta,\nu),
        \end{aligned}
\end{equation}
where $\lim_{\nu\rightarrow 0} h_{2}(T,\eta,\nu)=0$. Again using $\lim_{\eta\rightarrow 0} a\sigma_0(E_\eta)=a$ almost everywhere and that $a\sigma_0(E_\eta)$ is uniformly bounded in $\eta$ it follows from dominated convergence that
\begin{equation}\label{EQ_main_prop_3_7}
        \int_{S^*D} \bigg| \frac{1}{2T} \int_{-T}^{T} e^{it\tau} (\sigma_0(A_\eta)\circ\Phi^t - \bar{a}) \, dt \bigg|^2 \, d\mu  \leq  h_3(T,\eta)+ \int_{S^*D} \bigg| \frac{1}{2T} \int_{-T}^{T} e^{it\tau}  ( a\circ\Phi^t - \bar{a}) \, dt \bigg|^2 \, d\mu,
\end{equation}
where $\lim_{\eta\rightarrow 0} h_3(T,\eta)=0$. Moreover, from Lemma~\ref{LE:consequence_weak_mixing} it follows that the integral on the right-hand side of \eqref{EQ_main_prop_3_7} goes to zero as $T$ goes to infinity. Using this observation and combining the estimates in \eqref{EQ_main_prop_3_3}, \eqref{EQ_main_prop_3_6} we obtain 
\begin{equation}\label{EQ_main_prop_3_8}
    \limsup_{\lambda\rightarrow\infty}  \sum_{\substack{j\neq k: \lambda_j,\lambda_k\leq \lambda  \\ |\lambda_{j}-\lambda_{k}-\tau| < \delta}} \big| \langle a \psi_j , \psi_k\rangle  \big|^2 
    \leq h_1(T,\eta) + 4( h_{2}(T,\eta,\nu) +  h_3(T,\eta) +  h_4(T)).
\end{equation}
From this we see that by first fixing $T$ (and thereby also $\delta$) then $\eta$ and finally $\nu$ we can obtain that the right-hand side of \eqref{EQ_main_prop_3_8} is less than $\varepsilon$. This concludes the proof.
\end{proof}

\begin{proof}[Proof of Theorem~\ref{thm:main_1}]
    As mentioned the proof of Property (1) is already given in \cite{MR2879311}. For Property (2) the proof is almost identical to the one just given. The only difference is at the end of the proof when Lemma~\ref{LE:consequence_weak_mixing} is applied one should instead use Lemma~{2} from \cite{MR2879311} instead. This concludes the proof.
\end{proof}

\section{Proof of Theorem~\ref{Thm:Main_theorem_tori}}

Before we give a proof of Theorem~\ref{Thm:Main_theorem_tori} we will state and prove the following counting type lemma, which will be used in the proof of the theorem.

\begin{lemma}\label{LE:simple_counting_lemma}
    Let $l\in\R_{>}^2$ and let $k \in \Gamma_l \setminus\{(0,0)\}$, $\tau\in \R$, $\delta,\lambda>0$ be given. Define the number $\mathcal{W}_{l}(k,\lambda,\delta,\tau)$ by
    \begin{equation*}
        \mathcal{W}_{\Gamma_l}(k,\lambda,\delta,\tau) \coloneqq \# \big\{ n\in \Gamma_l \,\, | \, \, 2\pi|n| <\lambda, \, 2\pi|n-k| <\lambda, \, |2\pi|n| -2\pi|n-k| - \tau|<\delta   \big\}
    \end{equation*}
 Then we have the following
    \begin{equation*}
        \mathcal{W}_{\Gamma_l}(k,\lambda,\delta,\tau) \leq (2\pi^3 l_1l_2)^{-1}\delta\lambda^2 +\pi^{-1}((2\pi^2 l_1)^{-1}\delta +(\pi l_2)^{-1})\lambda +1.
    \end{equation*}
\end{lemma}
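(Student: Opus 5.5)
The plan is to slice the counting problem along the rows of the rectangular lattice and to count lattice points row by row. Write $R=\lambda/(2\pi)$, $\epsilon=\delta/(2\pi)$ and $c=\tau/(2\pi)$. Then $\mathcal{W}_{\Gamma_l}(k,\lambda,\delta,\tau)$ counts the points $n\in\Gamma_l$ lying in
\begin{equation*}
\Omega=\{x\in\R^2 : |x|<R,\ |x-k|<R,\ ||x|-|x-k|-c|<\epsilon\}.
\end{equation*}
Geometrically, $\Omega$ is the part of the disc $B(0,R)$ lying between two confocal hyperbola branches with foci $0$ and $k$. (When $|c|+\epsilon$ exceeds $|k|$, one of these degenerates; that case only makes $\Omega$ smaller.) The three terms in the claimed bound suggest the following bookkeeping. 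The area term $(2\pi^3l_1l_2)^{-1}\delta\lambda^2$ corresponds to $\area(\Omega)/(l_1l_2)$ with $\area(\Omega)\lesssim \epsilon R^2$. The term $(\pi^2 l_2)^{-1}\lambda=2R/l_2$ counts the rows $n_2\in l_2\Z$ meeting the disc. The remaining term $(2\pi^3 l_1)^{-1}\delta\lambda$ is a boundary correction of size $\epsilon R/l_1$.

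First, I fix a row $y=m l_2$ with $|m l_2|<R$; there are at most $2R/l_2+1$ of these. On this row I consider $f(x)=|(x,y)|-|(x,y)-k|$. I will show that $\{x : |f(x)-c|<\epsilon,\ |x|<R\}$ is a union of a bounded number of intervals, ideally one, and I will bound their total length $L_m$. The points of $l_1\Z$ in a union of $J$ intervals of total length $L$ number at most $L/l_1+J$. Summing over rows gives
\begin{equation*}
\sum_m \big(L_m/l_1 + J_m\big)\le \frac{1}{l_1}\sum_m L_m + J\Big(\frac{2R}{l_2}+1\Big).
\end{equation*}
The second term produces the $(\pi l_2)^{-1}\lambda/\pi$ and $+1$ contributions.

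Second, I compare $l_1^{-1}\sum_m L_m$ with an area. Each row segment of length $L_m$ is replaced by the strip of height $l_2$ around it. This gives $l_2\sum_m L_m\le \area(\Omega)+(\text{error})$, where the error is bounded by the variation of $L_m$ in $m$; this variation is where the $\epsilon R/l_1$ term should come from. The area itself I would compute in elliptic--hyperbolic coordinates $u=|x|+|x-k|$, $v=|x|-|x-k|$ with foci $0$ and $k$. Since $dx=\frac{(u^2-v^2)}{4\sqrt{(u^2-|k|^2)(|k|^2-v^2)}}\,du\,dv$, and since $u\le 2R$ on $\Omega$ with $v$ restricted to an interval of length $2\epsilon$, this yields $\area(\Omega)\le C\epsilon R^2$, with the constant matching $(2\pi^3)^{-1}$ after substituting $R$ and $\epsilon$.

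The main obstacle is the one-dimensional structure in the first step. Along a horizontal line, $f$ need not be monotone, and $|\partial_x f|$ can be small near where the line is tangent to a hyperbola. So the row sets could in principle be long, or split into several components. I would first try to show directly that $f'(x)=\cos\alpha(x)-\cos\beta(x)$ (the difference of the cosines of the angles to the two foci) changes sign at most once. That limits each row to at most two intervals. If this fails, I would avoid pointwise control of the lengths entirely: I would bound $\sum_m L_m$ by the area of $\Omega$ together with the number of tangency points, using coarea and the convexity of each hyperbola branch. A convex curve meets each horizontal line at most twice, which caps $J_m\le 2$ and keeps the count of boundary components linear in $R/l_2$.
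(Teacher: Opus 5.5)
\textbf{The gap.} The step that fails is the area bound $\area(\Omega)\le C\epsilon R^2$ with $C$ independent of $c$. It cannot be repaired, because the lemma itself is false when $|\tau|$ is close to $2\pi|k|$.

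Your own Jacobian shows this. The factor $(|k|^2-v^2)^{-1/2}$ is unbounded as $|v|\to|k|$, and
\begin{equation*}
\int_{c-\epsilon}^{c+\epsilon}\frac{dv}{\sqrt{|k|^2-v^2}}
\end{equation*}
is about $2\epsilon(|k|^2-c^2)^{-1/2}$ when $|c|$ is well inside $(-|k|,|k|)$. When $\big||c|-|k|\big|\lesssim\epsilon$, however, it is of order $\sqrt{\epsilon/|k|}$. The ``degenerate'' case $|c|+\epsilon>|k|$, which you set aside as making $\Omega$ smaller, is where $\Omega$ is largest. For $c=|k|$, the set $\Omega$ is $\{|x|-|x-k|>|k|-\epsilon\}$ cut by the discs. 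This is essentially a cone of opening $\sim\sqrt{\epsilon/|k|}$ around the ray $\{tk:t\ge 1\}$, with area $\sim R^2\sqrt{\epsilon/|k|}$ rather than $\epsilon R^2$.

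\textbf{A counterexample to the lemma.} Take $k=(l_1,0)$ and $\tau=2\pi l_1$. For $n_1>l_1$, the mean value theorem applied to $s\mapsto\sqrt{s^2+n_2^2}$ gives
\begin{equation*}
0\le l_1-(|n|-|n-k|)\le \frac{l_1n_2^2}{2(n_1-l_1)^2}.
\end{equation*}
Assume $\delta\le\pi l_1$. Then every $n\in\Gamma_l$ with $2l_1\le n_1\le\lambda/(4\pi)$ and $|n_2|<(n_1-l_1)\sqrt{\delta/(\pi l_1)}$ is counted by $\mathcal{W}_{\Gamma_l}(k,\lambda,\delta,\tau)$. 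Such $n$ satisfy $|n-k|<|n|<\sqrt2\,n_1<\lambda/(2\pi)$, so both disc conditions hold. Summing over $n_1\in l_1\Z$ gives
\begin{equation*}
\mathcal{W}_{\Gamma_l}\ge(16\pi^2l_1l_2)^{-1}\sqrt{\delta/(\pi l_1)}\,\lambda^2-O(\lambda).
\end{equation*}
This exceeds the claimed $(2\pi^3l_1l_2)^{-1}\delta\lambda^2+O(\lambda)$ once $\delta$ is small (depending on $l_1$) and $\lambda$ is large.

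\textbf{Comparison with the paper.} The paper uses a different route. It starts from $|n|^2-|n-k|^2=2\langle n,k\rangle-|k|^2$, which turns the condition into an essentially linear one in $\langle n,k\rangle$, and then counts $n_1$ row by row. For $\tau=0$ this is correct and much simpler than your hyperbola geometry. There the set lies in the fixed strip $|\langle n,k\rangle-|k|^2/2|<\delta\lambda/(4\pi^2)$, so no monotonicity, coarea or area-versus-lattice error analysis is needed. For $\tau\neq0$, however, the bounds the paper writes for $n_1$ still contain $|n|+|n-k|$, which depends on $n_1$. So they do not define an interval, and the same failure is hidden there.

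\textbf{What survives.} Uniformly in $\tau$, the best one can expect is a leading term of order $\sqrt{\delta}\,\lambda^2$; this comes from the $v$-integral above together with a lower bound on $|k|$ over the nonzero lattice points. A bound of that form, with the remaining terms $o(\lambda^2)$ uniformly in $k$ and $\tau$, would still suffice for Theorem~\ref{Thm:Main_theorem_tori}. The lemma as stated, however, cannot be proved by your method or any other.
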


\begin{proof}
    Without loss of generality, we may assume that $k_1 \neq0 $. Furthermore, we will assume that $k_1>0$, for the case where $k_1<0$ the difference in the proofs is a change of signs and two inequalities flipping. Firstly, we notice that for $n\in \mathcal{W}_{\Gamma_l}(k,\lambda,\delta,\tau) $ we have
    \begin{equation*}
    \begin{aligned}
       \Big| 2\langle n,k \rangle - | k |^2 - \frac{\tau}{2\pi} (|n| + |n-k|) \Big|
       &=\Big| |n|^2 - |n-k|^2 - \frac{\tau}{2\pi} (|n| + |n-k|)\Big|
        \\
        &=\Big|(|n| - |n - k| - \frac{\tau}{2\pi})(|n| +|n-k|) \Big|
        \\
        &\leq \frac{\delta \lambda}{2\pi^2},
    \end{aligned}
    \end{equation*}
    where we have used the assumption that $n\in \mathcal{W}_{l}(k,\lambda,\delta,\tau) $. This implies that 
    \begin{equation*}
    \begin{aligned}
       -\frac{\delta \lambda}{4\pi^2}+\frac{  | k |^2 + \tau (|n| + |n-k|)}{2} \leq \langle n,k \rangle \leq \frac{\delta \lambda}{4\pi^2}+ \frac{ | k |^2 + \tau (|n| + |n-k|)}{2}.
    \end{aligned}
    \end{equation*}
    Writing out the inner product and using the assumption $k_1>0$, we get
    \begin{equation*}
    \begin{aligned}
      -\frac{\delta \lambda}{4\pi^2}+ \frac{| k |^2 + \tau (|n| + |n-k|)}{2 k_1} - \frac{n_2k_2}{k_1} \leq  n_1 \leq \frac{\delta \lambda}{4\pi^2}+\frac{ | k |^2 + \tau (|n| + |n-k|)}{2 k_1} - \frac{n_2k_2}{k_1}. 
    \end{aligned}
    \end{equation*}
    This implies that for each fixed $n_2$, there can be at most $(2\pi^2 l_1)^{-1} \delta \lambda +1$ possible values for $n_1$, where we have used $k_1\geq 1$. Since we can have at most $  (\pi l_2)^{-1} \lambda +1$ different values, it follows that
    \begin{equation*}
        \#\mathcal{W}_{\Gamma_l}(k,\lambda,\delta,\tau) \leq  (2\pi^3 l_1l_2)^{-1}\delta\lambda^2 +\pi^{-1}((2\pi^2 l_1)^{-1}\delta +(\pi l_2)^{-1})\lambda +1
    \end{equation*}
    This concludes the proof.
\end{proof}

\begin{proof}[Proof of Theorem~\ref{Thm:Main_theorem_tori}]
Firstly, we notice that by Lemma~\ref{LE:exstension_to_other_basis} it will suffice to prove the theorem for a particular basis. Hence, we will choose to work with the basis $\{\psi_{n} \}_{n\in \Gamma_l^{*}}$, where 
\begin{equation*}
\psi_{n}(x) = \frac{1}{ \sqrt{l_1 l_2}}e^{-i2\pi \langle n,x\rangle } \qquad\text{for all $n\in \Gamma_l^{*}$.} 
\end{equation*}
For this basis, we notice that for all $n,m\in\Gamma_l^{*}$ we have that
\begin{equation}\label{EQ:Main_theorem_tori_proof_1}
        \langle a \psi_n, \psi_m \rangle =  \frac{1}{ \sqrt{l_1 l_2}}  \int_{\mathbb{T}_{l}} a(x) \psi_{n-m}(x) \, dx = \frac{\hat{a}(n-m)}{ \sqrt{l_1 l_2}},
\end{equation}
where $\hat{a}(n-m)$ is the Fourier coefficient of $a$ at $n-m$.

For point $(1)$ we notice that for any $a\in L^2(\mathbb{T}_{l})$ we have from \eqref{EQ:Main_theorem_tori_proof_1} that every term in the sum is the integral over $a$ divided by the area of the torus. Hence the conclusion follows.

As point $(2)$ is a special case of point $(3)$ we will only prove the latter one. Let $\varepsilon>0$ and $\tau\in\R$ be given and consider $a \in L^2(\mathbb{T}_{l})$. Using the observation in \eqref{EQ:Main_theorem_tori_proof_1}, we get for all $\delta>0$ that 
\begin{equation}\label{EQ:torus_mix_1}
        \begin{aligned}
         \sum_{\substack{n\neq m: \lambda_n,\lambda_m\leq \lambda  \\ |\lambda_{n}-\lambda_{m}-\tau| < \delta(\varepsilon)}} \big| \langle a \psi_n , \psi_m\rangle  \big|^2  
         &= \frac{1}{ l_1 l_2}  \sum_{\substack{n\neq m: \lambda_n,\lambda_m\leq \lambda  \\ |\lambda_{n}-\lambda_{m}-\tau| < \delta(\varepsilon)}} |\hat{a}(n-m)|^2 
         \\
         &= \frac{1}{ l_1 l_2} \sum_{k\in\Gamma_l^{*}\setminus\{(0,0)\}} |\hat{a}(k)|^2\mathcal{W}_{\Gamma_l^{*}}(k,\lambda,\delta,\tau) ,
        \end{aligned}
    \end{equation}
    where the set $\mathcal{W}_{\Gamma_l^{*}}(k,\lambda,\delta,\tau)$ is defined as in Lemma~\ref{LE:simple_counting_lemma}. Applying this lemma and Plancherel, we obtain from \eqref{EQ:torus_mix_1} the inequality
    \begin{equation*}
        \begin{aligned}
         \sum_{\substack{n\neq m: \lambda_n,\lambda_m\leq \lambda  \\ |\lambda_{n}-\lambda_{m}-\tau| < \delta(\varepsilon)}} \big| \langle a \psi_n , \psi_m\rangle  \big|^2  
         \leq \frac{(2\pi^3 l_1l_2)^{-1}\delta\lambda^2 +\pi^{-1}((2\pi^2 l_1)^{-1}\delta +(\pi l_2)^{-1})\lambda +1}{ l_1 l_2} \norm{a}_{L^2(\mathbb{T}_{l})}^2.
        \end{aligned}
    \end{equation*}
    By Weyl's law it follows from this inequality that 
    \begin{equation*}
        \limsup_{\lambda\rightarrow \infty}\frac{1}{\mathcal{N}(\sqrt{-\Delta_{\mathbb{T}_{l}}},\lambda)} \sum_{\substack{n\neq m: \lambda_n,\lambda_m\leq \lambda  \\ |\lambda_{n}-\lambda_{m}-\tau| < \delta(\varepsilon)}} \big| \langle a \psi_n , \psi_m\rangle  \big|^2 \leq \frac{2\delta }{\pi^2 l_1 l_2} \norm{a}_{L^2(\mathbb{T}_{l})}^2.
    \end{equation*}
    From this estimate, we see that choosing $\delta$ sufficiently small, we obtain the stated result.
\end{proof}

\end{document}